\documentclass[]{theclass}
\usepackage{mathtools}
\usepackage{enumitem} 
\usepackage{tikz}

\newcommand{\packA}{\ensuremath{(1,2,2,2,2)}}
\newcommand{\packB}{\ensuremath{(1,1,3,3,4)}}
\newcommand{\col}{\varphi}
\newcommand{\pos}{\{1,2,3,4\}}
\newcommand{\PP}{\mathcal P}
\newcommand{\ind}{\mathbf 1}

\definecolor{orchid}{rgb}{0.85, 0.44, 0.84}

\begin{document}
%\linenumbers

\begin{frontmatter}

\titledata{Packing colourings of claw-free subcubic graphs}{}

\authordata{Federico Romaniello}
{Dipartimento per l'Innovazione Umanistica, Scientifica e Sociale\\ Universit\`{a} degli Studi della Basilicata, Potenza, Italy\\ }{federico.romaniello@unibas.it}{}
\end{frontmatter}
\begin{abstract}
We prove that every claw-free subcubic graph admits a \packA-packing colouring, confirming a conjecture of Mortada, El Zein and Al Hajjar. The proof combines a reduction to line graphs of subdivided
loopless cubic multigraphs with local colouring extensions. We also give a negative answer to their question on \packB-packing colourings by constructing an infinite family of connected bridgeless claw-free cubic counterexamples. The smallest graph in this family has 36 vertices. More generally, we characterise the simple cubic graphs whose triangle expansions admit a \packB-packing colouring in terms of gated independent odd-cycle transversals. This characterisation gives a classification for connected bridgeless claw-free cubic bases and shows that the double triangle expansion of a loopless cubic multigraph admits such a colouring if and only if the multigraph has a perfect matching.
\keywords{$S$-packing colouring, claw-free graph, perfect matching}
\msc{05C15, 05C69, 05C70} 
\end{abstract}

\section{Introduction}\label{sec:introduction}
All graphs considered are finite and undirected. Graphs are simple unless explicitly described as multigraphs. Multigraphs may have parallel edges and, when stated, loops; a loop contributes two to the degree. A graph is \emph{subcubic} if its maximum degree is at most three, \emph{cubic} if every vertex has degree three, and \emph{claw-free} if it has no induced copy of $K_{1,3}$.

For a positive integer $i$, an \emph{$i$-packing} in a graph $G$ is a set of vertices whose pairwise distances in $G$ are greater than $i$. Given a nondecreasing sequence $S=(s_1,\ldots,s_t)$ of positive integers, an \emph{$S$-packing colouring} of $G$ is a partition of $V(G)$ into sets $V_1,\ldots,V_t$ such that $V_i$ is an $s_i$-packing for every $i\in\{1,\ldots,t\}$. Empty classes are permitted. In particular, a $1$-packing is an independent set, and a $(1,\ldots,1)$-packing colouring is an ordinary proper colouring.

When $S=(1,2,\ldots,t)$, this definition gives the packing colouring introduced by Goddard et al.~\cite{GHHR} under the name of broadcast colouring. The more general notion of $S$-packing colouring was studied by Goddard and Xu~\cite{GX}. Allowing repeated entries in $S$ makes it possible to combine independent sets with colour classes satisfying larger distance requirements.

Gastineau and Togni~\cite{GT} proved that every subcubic graph admits both a $(1,1,2,2,2)$-packing colouring and a $(1,2,2,2,2,2,2)$-packing colouring, and posed several questions concerning stronger distance requirements. For claw-free cubic graphs, Bre\v{s}ar, Kuenzel and Rall~\cite{BKR} established $(1,1,2,2)$-packing colourability.

El Zein and Mortada~\cite{EM} proved that every subcubic graph in which each vertex of degree three lies on a cycle of length at most four admits a $(1,2,2,2,2,2)$-packing colouring. They asked whether the number of $2$-packing classes can be reduced from five to four. Every vertex of degree three in a claw-free graph lies on a triangle. Our first result therefore answers their question for claw-free subcubic graphs.

Recently, Mortada, El Zein and Al Hajjar~\cite{MEA} proved that every claw-free subcubic graph admits a $(1,1,2,3)$-packing colouring. They also shown that every connected claw-free subcubic graph, apart from a specified graph $\mathcal H$ on 12 vertices, admits a $(1,1,3,3,3)$-packing colouring. The graph $\mathcal H$ is the triangle expansion $T(K_4)$; the operation $T$ is defined in Section~\ref{sec:preliminaries}.

In this paper we consider two questions proposed in~\cite{MEA}. Their Conjecture~1 states that every claw-free subcubic graph admits a \packA-packing colouring. Their Problem~1 asks whether every connected claw-free subcubic graph distinct from $\mathcal H$ admits a \packB-packing colouring. Thus the first question asks for a partition into one independent set and four $2$-packings, whereas the second asks for two independent sets, two $3$-packings and one $4$-packing.

We first prove the following theorem, which confirms Conjecture~1 of~\cite{MEA}.

\begin{theorem}\label{thm:mainA}
Let $G$ be a claw-free subcubic graph. Then $G$ admits a \packA-packing colouring.
\end{theorem}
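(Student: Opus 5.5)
We follow the strategy announced in the abstract: first reduce to line graphs of subdivided loopless cubic multigraphs, then extend local colourings. We argue by a minimal counterexample. Let $G$ be a claw-free subcubic graph with no \packA-packing colouring and with $|V(G)|$ minimum. Then $G$ is connected. We write the colour classes as $V_1$ (independent) and $V_2,\dots,V_5$ (2-packings).

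\textbf{Step 1: reduction to cubic-like structure.} Vertices of degree at most two give short extension arguments. If $v$ has degree at most $2$, we delete $v$, or contract a path of degree-two vertices, and colour the smaller graph by minimality. We then check that $v$ still has a free colour. Vertices at distance at most two from $v$ number at most $2+4=6$ in a claw-free subcubic graph, but triangles usually lower this count. The independent class $V_1$ also only needs to avoid the neighbours of $v$. So we need a careful count showing that colour $1$ or some $2$-packing colour remains available. Where a naive count fails (for example along long paths of degree-two vertices), we recolour along the path; a path can always be coloured $1,a,1,b,\dots$ once its ends are fixed, suitably adjusted. We also remove the standard local configurations: $K_4$ components, diamonds ($K_4$ minus an edge), and two triangles sharing structure. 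These are handled by replacing them with smaller gadgets and extending. After these reductions every vertex lies in exactly one triangle, or lies on a degree-two path between triangles. Such a $G$ is exactly the line graph $L(H')$, where $H'$ is a subdivision of a loopless cubic multigraph $H$: triangles correspond to vertices of $H$, and subdivision vertices of $H'$ give the degree-two paths. We expect this identification to be stated as a structural lemma in Section~\ref{sec:preliminaries}, together with the operation $T$.

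\textbf{Step 2: colouring $L(H')$.} Colour the vertices of $L(H')$ as edges of $H'$. Colour $1$ must form a matching $M$ of $H'$ that is induced in the line-graph sense, that is, with no two edges of $M$ adjacent. The classes $2,\dots,5$ must be sets of edges pairwise at distance at least $3$ in $L(H')$. The plan is to choose $V_1$ to contain exactly one vertex from every triangle, i.e.\ a choice of one incident edge at each vertex of $H$ such that the chosen edges do not share endpoints. In the pure cubic case, $G=T(H)$, every vertex then has at most... The remaining graph $G-V_1$ has maximum degree $2$ and is a disjoint union of paths and cycles, roughly the edges of $H$ plus one triangle edge per triangle. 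We then need to colour $G-V_1$ with four colours so that vertices at distance at most $2$ in $G$, not merely in $G-V_1$, receive distinct colours. The natural tool is to view this as colouring the square of $G$ restricted to $V\setminus V_1$. That square has maximum degree small enough that a greedy argument with a good ordering, or a Brooks-type argument, works locally. The difficult cases are the vertices seeing many others at distance $2$ through $V_1$.

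\textbf{Main obstacle.} The hard step is Step 2: choosing $V_1$ globally so that the 4-colouring of the distance-2 conflict graph on $V\setminus V_1$ exists. Our first attempt is to choose $V_1$ so that $G-V_1$ consists of long cycles and paths, each vertex of which has few distance-2 conflicts. We would take $V_1$ from a $2$-factor or a perfect matching structure in $H$ when one exists; otherwise we would use Petersen-type decompositions of $H$ into a spanning structure. We would then colour each cycle by a periodic pattern over $\{2,3,4,5\}$ and fix the boundary interactions locally, using the subdivision paths as slack. When the periodic pattern fails, we recolour a bounded neighbourhood by an exhaustive local extension lemma. We anticipate that these case checks on triangles and subdivided edges form the bulk of the proof.
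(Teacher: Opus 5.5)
Your skeleton (minimal counterexample, local extensions, triangle expansions viewed as line graphs of subdivisions) matches the paper. However, Step~2, which carries the entire positive result, contains no argument. The base case is a \packA-packing colouring of $T(H)\cong L(S(H))$ for an arbitrary loopless cubic multigraph $H$. Such a colouring is precisely a \packA-packing edge-colouring of $S(H)$, a graph in which every edge has endpoint-degree sum $3+2=5$. The paper obtains it at once from the theorem of Yang and Wu: every graph whose edges all have endpoint-degree sum at most five admits such an edge-colouring (Lemma~\ref{pos:lem:expansion}). That theorem is the missing ingredient, and your proposed substitutes do not reach it.

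Suppose $V_1$ contains one vertex of each triangle, and $x=(v,e)\notin V_1$ has its external neighbour $(u,e)$ in $V_1$. Then the following five vertices may all lie outside $V_1$ and within distance two of $x$: the other unselected vertex of $Q_v$, the two other vertices of $Q_u$, and the external neighbours of the two other vertices of $Q_v$. So $x$ can have degree five in the distance-two conflict graph on $V\setminus V_1$ while only four colours are available, and neither a greedy ordering nor Brooks' theorem applies. Choosing $V_1$ from a perfect matching or $2$-factor of $H$ is not available in general either. The theorem has no bridgelessness hypothesis, and a cubic multigraph with bridges may have no perfect matching. ``Periodic patterns with local repair'' does not amount to a proof.

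Step~1 also passes over the places where the reduction is delicate.
\begin{itemize}
\item \textbf{Loops.} In the structure theorem (Lemma~\ref{common:structure}(iii)), a loop of the skeleton is replaced by a nonempty string of diamonds attached to two vertices of one triangle. Collapsing that string to an edge would join those two vertices a second time, i.e.\ reintroduce a loop in the skeleton, and $T$ is defined only for loopless skeletons. The paper handles this with the pendant lobe $L$ of Lemma~\ref{pos:lem:loop} and an induction on the number of loops.
\item \textbf{Degree-two vertices.} A degree-two vertex $v$ with nonadjacent neighbours $u,w$ cannot simply be deleted and restored. If $\col(u)=0$ and the colours of $w$ and of the other neighbours of $u$ and $w$ cover $\pos$, then $v$ has no admissible colour. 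Restoring $v$ can also bring two equal positive colours to distance two. The paper instead suppresses $v$, checks that the result is still simple and claw-free, and restores $v$ using the explicit recolouring of Lemma~\ref{pos:lem:subdivision}. Your ``recolour along the path'' is where this work lies, and it is not specified.
\item \textbf{Degree-two paths.} Your reduced structure keeps degree-two paths between triangles, yet Step~2 only addresses the pure cubic case $G=T(H)$.
\end{itemize}
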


We then give a negative answer to Problem~1 of~\cite{MEA}. Let $R_k$ denote the ring of $k$ diamonds defined in Section~\ref{sec:negative}.

\begin{theorem}\label{thm:mainB}
Let $k\ge3$ be an odd integer. Then $T(R_k)$ is a connected bridgeless claw-free and diamond-free cubic graph with $12k$ vertices and $18k$ edges, and it admits no \packB-packing colouring.
\end{theorem}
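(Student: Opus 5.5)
We need to guess the definitions. $T(H)$ is the triangle expansion: replace each vertex of a cubic graph $H$ by a triangle, so each edge of $H$ becomes an edge joining two triangles. $R_k$, the ring of $k$ diamonds, is presumably $k$ diamonds ($K_4$ minus an edge) arranged in a cycle. Consecutive diamonds are joined by an edge between their degree-2 vertices. Then $R_k$ is cubic with $4k$ vertices and $6k$ edges, so $T(R_k)$ has $12k$ vertices and $18k$ edges.

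\textbf{Structural claims.} First I verify the structural assertions.
\begin{itemize}
\item $T(H)$ is cubic and claw-free for any cubic $H$: every vertex lies in its triangle, and its third neighbour lies outside that triangle, so no three neighbours are pairwise nonadjacent.
\item It is connected since $R_k$ is.
\item It is bridgeless because $R_k$ is bridgeless (each edge of $R_k$ lies on a cycle, either inside a diamond or around the ring), and the triangle edges lie on triangles.
\item It is diamond-free because two triangles of $T(H)$ share no edge when $H$ is simple. The only triangles are the vertex triangles, since $H$ has triangles but any cycle through an inter-triangle edge has length at least $6$.
\end{itemize}

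\textbf{Local constraints.} For non-colourability, suppose $\col$ is a \packB-packing colouring, with classes $1,1',3,3',4$. Each triangle $t_v$ of $T(H)$ has pairwise distance $1$, so it receives three distinct colours. At most one of these can be a colour-$1$ class per class, but in fact the two independent classes can both appear.

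The key observation concerns the large classes. Call a triangle \emph{heavy} if it contains a vertex of colour in $\{3,3',4\}$. Two vertices of the same large class must be at distance at least $4$ (resp.\ $5$). This forces that whenever $t_u,t_v$ are adjacent triangles (for $uv\in E(H)$), the set of large colours used on $t_u\cup t_v$ has no repetition. Each triangle uses at most two colours from $\{1,1'\}$, hence at least one large colour. So I would count: in a diamond of $R_k$, the four triangles are pairwise at distance at most about $3$ in $T$. This limits how the three large colours can be placed, and I expect to show that each diamond's triangle block must use exactly one triangle with two large colours, or a forced pattern.

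\textbf{Parity argument.} From that, I want to extract a parity invariant. The natural one is a bipartition-like labelling: on the ring, the ``connector'' edges between consecutive diamonds must alternate some binary state, for instance which of the two endpoint triangles of a connector carries colour $4$. It is also possible that the $1$-classes give a proper $2$-colouring of the contracted structure. Going around $k$ diamonds with $k$ odd then yields a contradiction. This matches the abstract's characterisation via gated independent odd-cycle transversals: $R_k$ with $k$ odd should fail to have such a transversal. So the cleaner route is to prove the local lemma that in $T(H)$, the vertices $v$ whose triangle contains a given large colour form a specific structure (an independent odd-cycle transversal of $H$ with a gate condition). Then I check directly that $R_k$ with odd $k$ has none. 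Inside a diamond, the triangle $\{a,b,c\}$ structure forces the transversal to hit each diamond in a specific way, and the remaining graph must be bipartite. The induced constraints propagate along the ring with alternation, and odd $k$ gives the contradiction.

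\textbf{Main obstacle.} The main obstacle is the local case analysis inside one diamond's four triangles (12 vertices). It requires enumerating placements of $3,3',4$ compatible with the distance constraints and with the two independent sets. I would first try to show that colour $4$ appears in each diamond block at most once and exactly determines the state of the block's two connectors, via exhaustive but symmetric casework.
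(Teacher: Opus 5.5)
Your verification of the structural claims is fine and matches what the paper obtains from Lemma~\ref{common:expansion}. The gap is the non-colourability, which is the real content of the theorem. You name the right target, a gated independent odd-cycle transversal of $R_k$, but you prove none of the steps. Your fallback plan (casework on the twelve vertices of a diamond block, a forced pattern per block, an alternating connector state) is not where the contradiction comes from.

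The first missing step is the transfer to the base graph, including the exact gate condition. Let $A,B,C$ be the sets of $v\in V(R_k)$ whose triangle $Q_v$ carries colour $3$, $3'$, $4$, respectively. Each $Q_v$ has at most two vertices coloured $1$ or $1'$, so $A\cup B\cup C=V(R_k)$. Any two vertices of $Q_u\cup Q_v$ with $uv\in E(R_k)$ are at distance at most $3$, so $A,B,C$ are independent. Hence $R_k-C$ is bipartite with sides $A\setminus C$ and $B\setminus(A\cup C)$. The condition specific to colour $4$ is the following. Suppose the $4$-vertex of $Q_c$ is $(c,x)$ and $x$ has another neighbour $c'\in C$. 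The walk $(c,x),(x,c),(x,c'),(c',x)$, followed by at most one edge of $Q_{c'}$, reaches the $4$-vertex of $Q_{c'}$ in at most four steps. So $x$ must satisfy $N_{R_k}(x)\cap C=\{c\}$, which is the gate condition. The paper packages this as Lemma~\ref{neg:lem:representatives} and Theorem~\ref{neg:thm:reduction}, after first normalising to exactly one large colour per triangle.

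The second missing step is the ring analysis, and here the gate condition cannot be avoided. $R_k$ has an independent odd-cycle transversal for every $k$, e.g.\ $\{a_1,d_1\}\cup\{b_i:i\ne1\}$, whose complement is the edge $b_1c_1$ together with a path. So no parity or alternation argument using only independence and bipartiteness can succeed. What is needed is this. $C$ must meet both triangles $a_ib_ic_i$ and $b_ic_id_i$ while staying independent, so $C\cap V(D_i)$ is $\{b_i\}$, $\{c_i\}$ or $\{a_i,d_i\}$. In the last case $b_i$ and $c_i$ are adjacent to both selected terminals. Also $d_{i-1}\notin C$ forces a selected centre in $D_{i-1}$, which is adjacent to $d_{i-1}$. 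Thus $a_i$ has no private neighbour. Hence $C$ contains one centre of every diamond, and $R_k-C$ is the cycle $C_{3k}$, which is odd for odd $k$, the desired contradiction. Without these two arguments your proposal does not show that $T(R_k)$ has no \packB-packing colouring.
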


For the positive result, we apply the packing edge-colouring theorem of Yang and Wu~\cite{YW} to subdivisions of loopless cubic multigraphs. This gives the required colouring of their triangle expansions. A structural decomposition and local extensions then yield Theorem~\ref{thm:mainA}, without any assumption on bridges. For the negative result, we prove that the triangle expansion of a simple cubic graph admits a \packB-packing colouring if and only if the base graph has a gated independent odd-cycle transversal. This criterion yields Theorem~\ref{thm:mainB}, a classification for connected bridgeless claw-free cubic bases, and a characterisation of colourable double triangle expansions in terms of perfect matchings.

Section~\ref{sec:preliminaries} contains the structural preliminaries. The two main theorems are proved in Sections~\ref{sec:positive} and~\ref{sec:negative}, respectively. Section~\ref{sec:classification} gives the further characterisations, and Section~\ref{sec:questions} discusses the remaining questions.

\section{Preliminaries}\label{sec:preliminaries}
For a graph $G$, let $N_G(v)$ denote the open neighbourhood of $v$, and put $N_G(X)=\bigcup_{v\in X}N_G(v)$.

A \emph{diamond} has vertices $a,b,c,d$ and edges $ab,ac,bc,bd,cd$. Its nonadjacent vertices $a,d$ are its \emph{terminals}, and $b,c$ are its \emph{centres}. A \emph{string of diamonds} is a sequence of vertex-disjoint diamonds joined consecutively through their terminals. To replace an edge $uv$ by such a string, delete $uv$ and join $u,v$ to the two free terminals. A \emph{ring of diamonds} joins such diamonds cyclically through their terminals; the exact notation $R_k$ is given in Section~\ref{sec:negative}.

Let $H$ be a loopless cubic multigraph. The vertices of its \emph{triangle expansion} $T(H)$ are the incidences $(v,e)$ with $v\in V(H)$ and $e$ incident with $v$. For each $v$, the three incidences at $v$ induce a triangle, denoted by $Q_v$. For each edge $e=uv$, add the edge $(u,e)(v,e)$. Edges inside $Q_v$ are called \emph{internal}; the other edges are \emph{external}. The external edges form a perfect matching. Distinct incidences are used even for parallel edges. If $H$ is simple, we also write $(v,u)$ for the incidence corresponding to $vu$.

\begin{lemma}\label{common:expansion}
Let $H$ be a loopless cubic multigraph. Then $T(H)$ is simple, cubic, claw-free and diamond-free. It is connected whenever $H$ is connected. An external edge of $T(H)$ is a bridge if and only if its corresponding edge in $H$ is a bridge; no internal edge is a bridge.
\end{lemma}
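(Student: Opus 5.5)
The plan is to check each property directly from the definition of $T(H)$, relying throughout on two facts: the triangles $Q_v$ partition $V(T(H))$, and the external edges form a perfect matching.

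\emph{Simplicity and degrees.} Every edge of $T(H)$ either lies inside some $Q_v$ or is the external edge $(u,e)(v,e)$ coming from an edge $e=uv$ of $H$. Since $H$ is loopless, we have $u\ne v$, so no external edge is a loop. Moreover, distinct edges of $H$, even parallel ones, give disjoint pairs of incidences. Hence no two external edges coincide, and an external edge never joins two vertices of the same $Q_v$, so it cannot duplicate an internal edge. Each vertex $(v,e)$ has two internal neighbours in $Q_v$ and exactly one external neighbour, so $T(H)$ is cubic.

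\emph{Claw-freeness and diamond-freeness.} Fix a vertex $x=(v,e)$. Its three neighbours are the two other vertices of $Q_v$, which are adjacent to each other, together with its external neighbour. So any three neighbours of $x$ include an adjacent pair, and $x$ is not the centre of an induced $K_{1,3}$. For diamonds, note that every triangle of $T(H)$ is some $Q_v$. Indeed, a triangle can contain at most one external edge, because the external edges form a matching. If it contained one, say $xy$ with $x\in Q_u$ and $y\in Q_w$, $u\ne w$, then its third vertex $z$ would have to be adjacent to both $x$ and $y$ via internal edges, forcing $z\in Q_u\cap Q_w=\emptyset$. A diamond consists of two triangles sharing an edge, and two distinct triangles $Q_u$, $Q_w$ are vertex-disjoint. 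Hence $T(H)$ has no diamond, and in particular no induced one.

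\emph{Connectivity and bridges.} Contracting every triangle $Q_v$ to a single vertex turns $T(H)$ into $H$, with each external edge becoming its corresponding edge of $H$. Each $Q_v$ is connected, so a path in $H$ lifts to a walk in $T(H)$ by inserting internal steps inside the triangles; hence $T(H)$ is connected whenever $H$ is. An internal edge lies on a triangle, so it is not a bridge. For an external edge $f$ corresponding to $e$, removing $f$ from $T(H)$ and then contracting the triangles gives exactly $H-e$. Since each $Q_v$ remains connected, the components of $T(H)-f$ are in bijection with the components of $H-e$, and the same holds for the components of $T(H)$ and $H$. Therefore $f$ is a bridge of $T(H)$ if and only if $e$ is a bridge of $H$.

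This component correspondence is the only step that needs any care. Everything else follows immediately from the definition.
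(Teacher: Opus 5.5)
Your proposal is correct and follows the paper's proof essentially step by step: distinct incidences give simplicity, adjacent internal neighbours give claw-freeness, the matching property forces every triangle to be some $Q_v$, and disjointness of the $Q_v$ gives diamond-freeness. The only mild variation is in the bridge claim. You compare components after contracting the triangles $Q_v$, which settles both directions at once, whereas the paper lifts cycles through $e$ in $H$ to cycles through the external edge and observes that a bridge of $H$ separates the two corresponding collections of triangles.
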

\begin{proof}
The use of distinct incidence vertices prevents loops and parallel edges. Each vertex has two internal neighbours and one external neighbour, and its internal neighbours are adjacent. Moreover, the only triangles are the $Q_v$: a triangle using an external edge would need two consecutive external edges, which is impossible because they form a matching. The triangles are disjoint, so no diamond occurs. Connectivity is preserved. Internal edges lie on triangles. Every cycle through an edge of $H$ lifts to a cycle through its corresponding external edge; conversely, a bridge of $H$ separates the corresponding two collections of triangles in $T(H)$.
\end{proof}

We use the structural description of Gutierrez and Ugarte~\cite[Proposition~3]{GU}, which extends Oum's bridgeless description~\cite[Proposition~1]{Oum} to all connected claw-free cubic graphs. 

\begin{lemma}\label{common:structure}
Let $F$ be a connected claw-free cubic graph. Then either
\begin{enumerate}[label=(\roman*)]
\item $F\cong K_4$;
\item $F$ is a ring of at least two diamonds; or
\item $F$ is obtained from a connected cubic multigraph $H$ by replacing every vertex with a triangle and replacing edges with possibly empty strings of diamonds. Every loop of $H$ is replaced by a nonempty string.
\end{enumerate}
\end{lemma}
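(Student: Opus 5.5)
The plan is to classify vertices of $F$ by the cliques and diamonds that contain them, and then contract. Fix $v\in V(F)$ with neighbours $x,y,z$. Since $F$ is claw-free, at least one of $xy,xz,yz$ is an edge, so every vertex lies on a triangle. If all three are edges, $\{v,x,y,z\}$ induces $K_4$, which is a cubic component, so connectivity gives $F\cong K_4$, which is case (i). Assume from now on that $F$ contains no $K_4$. Then $N(v)$ spans exactly one or exactly two edges. If it spans two edges, $v$ is a centre of an induced diamond. If it spans one, $v$ lies on exactly one triangle.

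The first step is to show that the induced diamonds are pairwise vertex-disjoint, and that each vertex lies either in exactly one diamond or in exactly one triangle that is in no diamond. In a diamond $a,b,c,d$ the centres $b,c$ already have degree $3$ inside it. Each terminal has two neighbours inside and one outside, and $ad\notin E(F)$ because otherwise $F\supseteq K_4$. A second diamond through a terminal $a$ would have to use the single outside edge at $a$ together with edges of the first diamond, and one checks that this forces $a$ to have degree at least $4$ or creates a $K_4$. Similarly, a triangle not contained in a diamond cannot meet another triangle, since two triangles sharing a vertex or an edge produce either a diamond or a vertex of degree at least $4$. I expect this case check to be the main obstacle. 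It is routine but must be organised carefully, using only the degree bound and the absence of $K_4$.

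Next I contract. The outside neighbour of a terminal is either a terminal of another diamond or a vertex of a diamond-free triangle. Hence diamonds are linked through their terminals into maximal strings, which are paths of diamonds, or cycles of diamonds. If there are no diamond-free triangles, connectivity forces $F$ to be one cycle of diamonds, that is, a ring. A ring of one diamond would make $ad$ an edge, which we excluded, so the ring has at least two diamonds, giving case (ii).

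Otherwise I form $H$ as follows. Contract every diamond-free triangle $Q$ to a vertex $v_Q$. Each of the three vertices of $Q$ has exactly one edge leaving $Q$. Following that edge through a possibly empty maximal string of diamonds leads to a vertex of some triangle $Q'$, and I record an edge $v_Qv_{Q'}$ of $H$. This makes $H$ a connected cubic multigraph, with parallel edges allowed, and $F$ is recovered as in (iii). Finally, a loop at $v_Q$ with an empty string would be an edge between two vertices of $Q$ that is not an edge of $Q$. That is impossible because $Q$ is already a clique, so every loop must come from a nonempty string.
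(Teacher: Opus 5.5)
Your argument is correct, but it takes a different route from the paper. The paper gives no proof of this lemma: it imports it from Gutierrez and Ugarte~\cite{GU} (Proposition~3), who extend Oum's description of bridgeless claw-free cubic graphs to the general case. You instead reprove the structure theorem directly by local analysis. Every vertex lies on a triangle, and a $K_4$ is a whole component. Vertices whose neighbourhood spans two edges are diamond centres. Diamonds, and diamond-free triangles, are pairwise disjoint. You then contract along maximal strings.

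Your route is self-contained. It shows that bridgelessness is never used, so the general version costs nothing extra, and it makes visible why loops of $H$ must carry nonempty strings. The citation buys brevity and relies on a published, checked case analysis, which you only sketch.

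If you write it out, tighten three points:
\begin{enumerate}[label=(\arabic*)]
\item In the overlap check, the vertex forced to degree four is a centre of the first diamond, not $a$: for instance $a'b\in E(F)$ gives $b$ four neighbours. Also, a second diamond through $a$ that avoids the edge $aa'$ must contain $a,b,c$, and then its fourth vertex is forced to be $d$, so it equals the first diamond.
\item When a diamond-free triangle exists, say explicitly that a cyclic string of diamonds would be an entire component. By connectivity, every diamond then lies on a string between two triangle vertices, and this is what makes your reconstruction recover all of $F$.
\item Note that pairing triangle vertices along strings is a fixed-point-free involution, because every terminal or triangle vertex has exactly one edge leaving its block. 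This gives each $v_Q$ degree three, with loops counted twice.
\end{enumerate}
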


\section{\texorpdfstring{$\packA$}{(1,2,2,2,2)}-packing colourings of claw-free subcubic graphs}\label{sec:positive}
Throughout this section we use the colour set $\{0,1,2,3,4\}$. Colour $0$ is the independent-set colour, and the colours in $\pos$ are called \emph{positive colours}. A colouring $\col$ is valid if adjacent vertices have different colours and distinct vertices of the same positive colour have distance at least three. Positive colours may be permuted freely; colour $0$ is kept fixed.

\subsection{Triangle expansions and local extensions}
\begin{lemma}\label{pos:lem:expansion}
Let $H$ be a loopless cubic multigraph. Then $T(H)$ admits a \packA-packing colouring.
\end{lemma}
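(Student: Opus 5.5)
The plan is to recognise $T(H)$ as a line graph and transfer the problem to an edge-colouring problem on the subdivision of $H$, where the packing edge-colouring theorem of Yang and Wu~\cite{YW} does the work.

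Let $S(H)$ be the graph obtained from $H$ by subdividing every edge $e=uv$ once, with a new vertex $w_e$. Since $H$ is loopless, $S(H)$ is a simple bipartite subcubic graph. Parallel edges of $H$ give distinct paths $u\,w_e\,v$ and $u\,w_{e'}\,v$. The edges of $S(H)$ are exactly the $uw_e$, one for each incidence $(u,e)$. I will check that $T(H)\cong L(S(H))$ via $(u,e)\mapsto uw_e$:
\begin{itemize}
\item two incidences at the same vertex $v$ (an edge of $Q_v$) correspond to two edges of $S(H)$ sharing $v$;
\item the external edge $(u,e)(v,e)$ corresponds to the two edges $uw_e,vw_e$ sharing $w_e$;
\item no other pairs of edges of $S(H)$ share an endpoint.
\end{itemize}

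Under this identification, a set of vertices of $T(H)$ is an independent set exactly when the corresponding edges form a matching of $S(H)$. It is a $2$-packing exactly when the corresponding edges form an induced (strong) matching: pairwise distance at least $3$ in the line graph means that no two of the edges share a vertex and no edge of $S(H)$ joins them. So a \packA-packing colouring of $T(H)$ is the same thing as a partition of $E(S(H))$ into one matching and four induced matchings, i.e.\ a \packA-packing edge-colouring of $S(H)$.

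The main step is then to invoke Yang and Wu~\cite{YW}, which supplies such an edge-colouring for the relevant class. The obstacle is checking that $S(H)$ meets its hypotheses. It is subcubic and simple, every edge has an endpoint of degree $2$, and it is bipartite with one side of degree $2$. If the theorem is stated only for simple subcubic graphs, or for graphs in which every edge is incident with a vertex of degree at most $2$, this is immediate. If it is stated for multigraphs, we may even apply it directly to an appropriate object. Either way, I would write out the verification of the hypotheses explicitly, including the case where $H$ is disconnected; there each component is handled separately, since colour classes are defined by distance.

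Pulling the edge-colouring back along the isomorphism gives colour $0$ on the matching class and positive colours on the four induced matchings. Validity then follows from the dictionary above: the matching class is independent in $T(H)$, and each induced-matching class has pairwise distances at least $3$ in $T(H)$.
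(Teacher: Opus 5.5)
Your proposal is correct and follows the paper's proof: subdivide to obtain $S(H)$, identify $T(H)\cong L(S(H))$ via $(u,e)\mapsto uw_e$, and transfer a \packA-packing edge-colouring of $S(H)$ supplied by Yang and Wu. The hypothesis to cite from their theorem is that every edge has endpoint degree sum at most five. $S(H)$ satisfies this because each of its edges joins a degree-three vertex to a degree-two vertex.
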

\begin{proof}
Let $S(H)$ be obtained by subdividing each edge $e$ of $H$ once, and denote the new vertex by $s_e$. Since $H$ is loopless and distinct edges receive distinct subdivision vertices, $S(H)$ is simple, even when $H$ has parallel edges. Every edge of $S(H)$ joins a vertex of degree three to a vertex of degree two.

Yang and Wu~\cite{YW} proved that every graph in which each edge has endpoint degree sum at most five admits a \packA-packing edge-colouring, with distances measured in the line graph. Applying their theorem to $S(H)$ gives a \packA-packing colouring of its line graph $L(S(H))$.

The map $(v,e)\mapsto vs_e$ is an isomorphism from $T(H)$ to $L(S(H))$: the three edges incident with an original vertex $v$ form its replacing triangle, and the two edges incident with $s_e$ give the external edge corresponding to $e$. Thus the colouring transfers to $T(H)$.
\end{proof}

We will repeatedly replace an edge by a graph with two attachment vertices, or replace a subgraph attached through a single edge. These operations do not shorten distances between vertices of the retained part. To check an extension across one attachment edge $wx$, it is enough to check the new colours against $w$ and its retained neighbours: only $x$ can be at distance two from a retained neighbour of $w$. For a two-ended replacement of an edge $uv$, the corresponding checks are made at $u,v$ and their retained neighbours. We make these checks explicit below.

\begin{lemma}\label{pos:lem:diamond}
Let $F$ be a graph with a \packA-packing colouring $\col$, and let $uv\in E(F)$. Let $F'$ be obtained by deleting $uv$, adding a diamond with terminals $a,d$, and adding the edges $ua,dv$. Then $\col$ extends to a \packA-packing colouring of $F'$.
\end{lemma}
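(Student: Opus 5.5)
The plan is to keep $\col$ on $V(F)$ unchanged and to colour only the four new vertices $a,b,c,d$ greedily. Colour $0$ is used wherever it is allowed, because an independent-set colour only has to differ from adjacent colours. I will use the fact that we are in the subcubic setting of this section, so that in $F'$ each of $u,v$ has at most two neighbours other than $a$ (respectively $d$). I will also use the principle stated before the lemma. Deleting $uv$ and inserting the diamond does not shorten distances between retained vertices, so retained pairs need no rechecking. The new vertices only see, within distance two, the following: $a$ sees $u$, the other neighbours of $u$, and $b,c,d$; $d$ sees symmetrically $v$, the other neighbours of $v$, and $a,b,c$; and $b,c$ see each other, $a$, $d$, $u$ and $v$. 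In particular $a$ is at distance $3$ from $v$, $d$ is at distance $3$ from $u$, and $b,c$ are at distance at least $3$ from every other retained vertex.

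Write $\alpha=\col(u)$ and $\beta=\col(v)$; these are distinct since $uv\in E(F)$. I would split into three cases according to how many of $\alpha,\beta$ equal $0$.

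\emph{Neither is $0$.} Put $\col(a)=\col(d)=0$; this is allowed since $a,d$ are nonadjacent and $u,v$ are nonzero. Then $b,c$ need two distinct positive colours avoiding $\{\alpha,\beta\}$, and at least two such colours remain.

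\emph{Exactly one is $0$, say $\alpha=0$.} Give $a$ a positive colour avoiding the colours of the at most two other neighbours of $u$; since $v$ is at distance $3$ from $a$, this is possible. Put $\col(d)=0$, which is allowed because $\beta\neq0$. Then $b,c$ take two positive colours outside $\{\col(a),\beta\}$, and again at least two remain.

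\emph{Both are $0$.} Give $a$ a positive colour avoiding the at most two colours on $N(u)\setminus\{a\}$. Give $d$ a positive colour avoiding the at most two colours on $N(v)\setminus\{d\}$ together with $\col(a)$, since $d$ is at distance $2$ from $a$. This forbids at most three of the four positive colours, so a choice exists. Now put $\col(b)=0$, which is allowed because $a,c,d$ are nonzero, and give $c$ a positive colour outside $\{\col(a),\col(d)\}$. For $c$ the vertices $u,v$ at distance two carry colour $0$ and impose nothing.

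The only delicate point, and the step I would check most carefully, is the counting in the last case. There $d$ must dodge three colours, and the argument only works because $u$ and $v$ have at most two other neighbours each. So the proof needs the degree bound from the subcubic setting, which I would state explicitly. Everything else is the routine distance bookkeeping described above.
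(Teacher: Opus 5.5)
Your case analysis is correct when $F$ is subcubic, which is the only setting in which the paper applies the lemma. However, the lemma is stated for an arbitrary graph $F$, and you prove it only under a degree hypothesis that is neither given nor needed. There are two concrete issues.

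First, your third case cannot occur. You observed that $\alpha\ne\beta$, and since colour $0$ is an independent set, $u$ and $v$ are never both $0$. So the step you single out as the delicate one is vacuous, and your remark that the argument ``only works because'' of the degree bound is mistaken.

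Second, the only real use of $d_F(u)\le 3$ is in your second case, when choosing $\col(a)$. There the colour $\beta$ is always admissible. In $F$ the vertex $v$ lies within distance two of $u$ and of every other neighbour of $u$, so none of them has colour $\beta$. In $F'$ the vertex $v$ is at distance three from $a$. Take $\col(a)=\beta$ in that case, and then any two distinct positive colours other than $\beta$ on $b,c$. This removes every degree assumption, and your argument then proves the lemma as stated.

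This repair is exactly the paper's proof, done uniformly. The paper swaps the endpoint colours across the diamond, setting $\col(a)=\col(v)$ and $\col(d)=\col(u)$, and gives $b,c$ two positive colours outside $\{\col(u),\col(v)\}$. The swapped colour at a terminal is automatically safe against the retained neighbourhood of the adjacent endpoint, because it was already safe there in $F$. When the swapped colour is $0$, the only old neighbour of that terminal carries the other, distinct colour. So no case split and no counting are needed. Your choice $\col(a)=\col(d)=0$ when both endpoints are positive is an equally valid alternative.
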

\begin{proof}
Write $x=\col(u)$ and $y=\col(v)$; necessarily $x\ne y$. Replace $uv$ by the diamond $a,b,c,d$ and the links $ua,dv$. Set
\[
\col(a)=y,\qquad \col(d)=x,
\]
and give $b,c$ two distinct positive colours outside $\{x,y\}$. At least two such positive colours are available.

The assignment is valid inside the diamond. If $\col(a)\ne0$, its colour was the old colour of $v$, so it differs from the colours of $u$ and all retained neighbours of $u$; these were at distance at most two from $v$ before the replacement. If $\col(a)=0$, the required inequality with $u$ follows from $x\ne y$. The same argument applies to $d$ at $v$. The vertices $b,c$ are at distance two from both attachment vertices and their colours avoid $x,y$; they are at distance at least three from all other old vertices. Distances between old vertices cannot decrease. Therefore the entire colouring is valid.
\end{proof}

The next local extension will handle loops in a representing multigraph. Let $L$ consist of a triangle $z_0z_1z_2$, a diamond $a,b,c,d$, and the edges $z_0a,dz_1$. The vertex $z_2$ has degree two and every other vertex of $L$ has degree three. Attaching $L$ at a vertex $w$ means adding $wz_2$; see Figure~\ref{pos:fig:lobe}.

\begin{figure}[tb]
\centering
\begin{tikzpicture}[scale=.9,v/.style={circle,fill=black,inner sep=1.6pt},every label/.style={font=\small}]
\coordinate (w) at (-2,0);\coordinate (z2) at (-1,0);\coordinate (z0) at (0,.8);\coordinate (z1) at (0,-.8);
\coordinate (a) at (1,.8);\coordinate (d) at (1,-.8);\coordinate (b) at (.6,0);\coordinate (c) at (1.8,0);
\draw (w)--(z2)--(z0)--(z1)--(z2) (z0)--(a)--(b)--(c)--(a) (b)--(d)--(c) (d)--(z1);
\node[v,label=left:$w$] at (w) {};
\node[v,label=below:$z_2$] at (z2) {};
\node[v,label=above:$z_0$] at (z0) {};
\node[v,label=below:$z_1$] at (z1) {};
\node[v,label=above:$a$] at (a) {};
\node[v,label=below:$d$] at (d) {};
\node[v,label=left:$b$] at (b) {};
\node[v,label=right:$c$] at (c) {};
\end{tikzpicture}
\caption{The pendant loop lobe $L$ attached at the boundary vertex $w$ of the retained graph.}\label{pos:fig:lobe}
\end{figure}

\begin{lemma}\label{pos:lem:loop}
Let $F$ be a graph with a \packA-packing colouring $\col$, and let $w\in V(F)$ have degree at most two. Let $F'$ be obtained by attaching a copy of $L$ at $w$. Then $\col$ extends to a \packA-packing colouring of $F'$.
\end{lemma}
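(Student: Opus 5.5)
The plan is to extend $\col$ by an explicit assignment on the eight vertices of $L$, splitting on whether $x=\col(w)$ is $0$. Since $w$ has degree at most two in $F$, it has at most two retained neighbours, say $w_1,w_2$. Following the remark before Lemma~\ref{pos:lem:diamond}, the only interactions between new and old vertices are these: $z_2$ is adjacent to $w$ and at distance two from $w_1,w_2$; $z_0,z_1$ are at distance two from $w$; and $z_0,z_1$ are at distance three from $w_1,w_2$. All diamond vertices are at distance at least three from $w$ and at least four from $w_1,w_2$. Distances between old vertices do not change. Inside $L$, the diamond has diameter two, so its positive colours must be pairwise distinct. Moreover, $b,c$ are at distance two from both $z_0$ and $z_1$ (via $a$ and $d$), $a$ is at distance two from $z_1,z_2$, and $d$ is at distance two from $z_0,z_2$.

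\emph{Case $x\neq0$.} Set $\col(z_2)=0$; this is fine next to $w$, and colour $0$ has no distance-two constraint. Give $z_0,z_1$ two distinct positive colours $p,q$ from $\pos\setminus\{x\}$, so three are available. This handles the distance-two condition at $w$, and $w_1,w_2$ are too far to matter. Set $\col(a)=\col(d)=0$; these are nonadjacent, and they are adjacent only to positively coloured $z_0$, $z_1$, $b$, $c$. Finally give $b,c$ the two positive colours in $\pos\setminus\{p,q\}$. They are distinct, avoid $p,q$, and are at distance at least three from all old vertices.

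\emph{Case $x=0$.} Now $z_2$ must be positive. Choose $p\in\pos$ different from the colours of $w_1,w_2$; at most two colours are excluded, so this is possible. Set $\col(z_2)=p$ and $\col(z_0)=0$; $z_0$ is not adjacent to $w$. Set $\col(z_1)=q$ with $q\in\pos\setminus\{p\}$. Since $z_0$ has colour $0$, $a$ must be positive. Give it a colour $r\notin\{p,q\}$, because $a$ is at distance two from $z_2$ and $z_1$. Set $\col(d)=0$, which is allowed since $z_1,b,c$ are positive. Then $b,c$ must be positive and distinct from $r$ (adjacent to $a$) and from $q$ (distance two via $d$). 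The distance from $b,c$ to $z_2$ is three, so $p$ imposes nothing. Give $b,c$ the two colours of $\pos\setminus\{q,r\}$. Thus $q,r$ and the colours of $b,c$ exhaust $\pos$, which is exactly enough.

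The main obstacle is bookkeeping rather than ideas. The key point is verifying that four positive colours suffice in the second case, where $a,b,c$ and $z_1$ all interact. The choice $\col(z_0)=\col(d)=0$ is made precisely to free $z_0$ and $d$ from positive-colour constraints. I would finish by listing every pair of vertices at distance at most two involving a new vertex and checking each against the assignment above.
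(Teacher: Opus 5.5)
Your proposal is correct and is essentially the paper's proof. The paper makes the same case split on whether $\col(w)=0$, and its two explicit tables for $(z_0,z_1,z_2,a,b,c,d)$, namely $(2,3,0,0,1,4,0)$ and $(0,2,1,3,1,4,0)$, are instances of your parametrised assignments, with the same distance checks (including that $b,c$ lie at distance three from $z_2$ and may reuse its colour).
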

\begin{proof}
If $\col(w)\ne0$, we may assume that $\col(w)=1$. If $\col(w)=0$, at most two positive colours occur on its neighbours; choose an absent one and call it $1$. Apply Table~\ref{pos:tab:loop}.

\begin{table}[htb]
\centering
\begin{tabular}{ccl}
\toprule
$\col(w)$ & Additional convention & $(\col(z_0),\col(z_1),\col(z_2),\col(a),\col(b),\col(c),\col(d))$\\
\midrule
1 & none & $(2,3,0,0,1,4,0)$\\
0 & $1\notin\col(N_F(w))$ & $(0,2,1,3,1,4,0)$\\
\bottomrule
\end{tabular}
\caption{The seven-vertex pendant loop lobe.}\label{pos:tab:loop}
\end{table}

In the first row all positive colours in $L$ are distinct and the zeros are independent. In the second row the only repeated positive colour is $1$, on $z_2,b$, and these vertices have distance three; again the zeros are independent. If $\col(w)=1$, the attachment vertex $z_2$ receives colour $0$ and its internal neighbours avoid $1$. If $\col(w)=0$, the attachment vertex $z_2$ has the positive colour $1$ absent from $N_F(w)$. The two internal neighbours of $z_2$ are at distance two from $w$, which creates no additional restriction when $\col(w)=0$. No other new vertex lies within distance two of $F$. Since the attachment is pendant, distances between old vertices are unchanged.
\end{proof}

\subsection{Claw-free cubic graphs}
\begin{theorem}\label{pos:thm:cubic}
Let $G$ be a claw-free cubic graph. Then $G$ admits a \packA-packing colouring.
\end{theorem}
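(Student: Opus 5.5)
It suffices to treat connected $G$, since a colouring of a disjoint union is the union of colourings of its components. We apply Lemma~\ref{common:structure} to $G$ and handle the three outcomes separately.

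\emph{Case (i).} If $G\cong K_4$, any four distinct positive colours work: all distances are one, and each positive colour appears once.

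\emph{Case (ii).} If $G$ is a ring of $k\ge2$ diamonds, we reduce to a base graph. Take any graph with a valid colouring that contains an edge $uv$ such that, after replacing $uv$ by a string of diamonds, we obtain the ring; then apply Lemma~\ref{pos:lem:diamond} repeatedly. The natural choice is to delete one diamond, keeping its terminals as $u,v$. Concretely, start from a ring with fewer diamonds, or directly colour a small ring by hand. The simplest route is to give an explicit colouring of the ring $R_2$, which is the $8$-vertex ring of two diamonds. Every further diamond can then be inserted on a link edge between consecutive diamonds by Lemma~\ref{pos:lem:diamond}. This works because that lemma applies to any edge of any validly coloured graph, and inserting a diamond on a link edge of a ring of $k$ diamonds yields the ring of $k+1$ diamonds. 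For $R_2$ with diamonds $a_ib_ic_id_i$ and links $d_1a_2$, $d_2a_1$, the colouring $a_1=0$, $d_1=1$, $a_2=2$, $d_2=3$ fixes the terminals. The centres then need colours distinct from their terminals and at distance at least three from the other same-coloured vertices. Since the diameter is small, some repetition of $0$ is needed: set $a_1=d_1=0$ (nonadjacent), $b_1=1$, $c_1=2$, $a_2=3$, $d_2=4$, $b_2=0$, and $c_2$ would then need a fifth positive colour. So the explicit colouring must be found by a short search, and I would verify one by hand; this small case is a minor but real obstacle. If no colouring of $R_2$ exists, we instead start from $R_3$ or handle the small rings directly.

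\emph{Case (iii).} $G$ arises from a connected cubic multigraph $H$. First handle loops. A loop at $v$ in $H$ forces the other edge at $v$ to be a non-loop $vw$ (or $H$ is a single vertex with a loop plus a half edge, which is impossible for cubic). After the replacement, the triangle at $v$ together with the nonempty diamond string on the loop forms a pendant lobe hanging from the edge joining $v$'s triangle toward $w$. That lobe is a copy of $L$ when the string has one diamond, and longer strings are obtained from $L$ by Lemma~\ref{pos:lem:diamond} applied inside the lobe after attaching.

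So we let $H_0$ be obtained from $H$ by deleting every looped vertex. Then we suppress the resulting degree-two vertices, or rather keep them: we want a loopless cubic multigraph $H'$ whose triangle expansion $T(H')$ is $G$ with all lobes removed and all diamond strings contracted to single edges. Removing a lobe leaves an attachment vertex $w$ of degree two in a triangle. To regain cubicity, pair up such deficient spots by adding fresh edges, or colour instead a graph that is a triangle expansion with some external edges deleted. Deleting edges never invalidates a packing colouring, since distances only grow.

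The plan is therefore as follows. Build a loopless cubic multigraph $H'$ from $H$ by removing looped vertices and joining the freed neighbours in pairs, adding a new auxiliary cubic gadget if their number is odd. Colour $T(H')$ by Lemma~\ref{pos:lem:expansion}. Delete the auxiliary edges and gadgets, noting that subgraphs with the induced metric only increase distances. Next reinsert the diamond strings on external edges by repeated use of Lemma~\ref{pos:lem:diamond}. Finally attach the lobes by Lemma~\ref{pos:lem:loop} at the now degree-two vertices, extending each lobe's string by Lemma~\ref{pos:lem:diamond}.

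The main obstacle is the degenerate situation where $H$ has no loopless part after deletion, for example a vertex with a loop joined to another vertex with a loop. That gives two lobes joined by an edge, and we handle it directly: colour one lobe from scratch and attach the other by Lemma~\ref{pos:lem:loop}. The ordering of operations also needs care, so that each lemma's hypothesis (for instance that $w$ has degree at most two) holds at the moment it is applied.
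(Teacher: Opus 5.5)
Your overall plan matches the paper's. You colour $K_4$ directly, and the skeleton via Lemma~\ref{pos:lem:expansion} and Lemma~\ref{pos:lem:diamond}. Each pendant lobe is replaced by a loopless gadget, the graph is coloured, the gadget is deleted, and $L$ is reattached by Lemma~\ref{pos:lem:loop}. As written, though, there are two gaps, both fixable.

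\textbf{Rings.} You never establish the base case, and your claim that your attempt forces a fifth positive colour is wrong. In your partial assignment the vertices within distance two of $c_2$ are $a_2,b_2,d_2,a_1,d_1$, with colours $3,0,4,0,0$. The vertices $b_1,c_1$ are at distance three from $c_2$, so $c_2=1$ is valid and your colouring of $R_2$ is complete. More simply, a ring of $k$ diamonds is $K_4$ (a diamond plus the edge joining its terminals) with that edge replaced by a string of $k-1$ diamonds. Lemma~\ref{pos:lem:diamond} applied to the colouring of $K_4$ therefore covers every ring. The paper instead colours one centre of each diamond $0$ and the remaining cycle $C_{3k}$ cyclically $1,2,3$.

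\textbf{Pairing the freed spots.} Joining freed neighbours in pairs need not give a loopless multigraph. A vertex of $H_0$ can be adjacent to two or three looped vertices, and its spots may have nothing else to pair with. For example, let $H$ be a vertex $w$ joined to three looped vertices. Then $H_0=\{w\}$ carries all three spots, so any pairing creates a loop and Lemma~\ref{pos:lem:expansion} no longer applies.

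The fix is to cap each spot separately with a loopless gadget having exactly one vertex of degree two, for instance the graph $J$ obtained from $K_4$ by subdividing one edge. The paper does exactly this, in the form of the expanded one-pole, one loop at a time by induction on the number of loops. With that change the rest of your argument goes through:
\begin{itemize}
\item deleting the gadgets only increases distances;
\item each spot then has degree two, so Lemma~\ref{pos:lem:loop} applies;
\item the remaining diamonds, both in the loop string and on the attaching edge, can be inserted by Lemma~\ref{pos:lem:diamond} at any stage.
\end{itemize}
Treating all lobes at once rather than inductively is a harmless variation. The paper's induction also absorbs your separate two-lobe case automatically.
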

\begin{proof}
We may work with a connected component and apply Lemma~\ref{common:structure}. For $K_4$, assign the distinct colours $0,1,2,3$.

For a ring of $k\ge2$ diamonds, colour one centre of each diamond $0$. The remaining vertices form a cycle of length $3k$; colour them cyclically $1,2,3,1,2,3,\ldots$. Any two remaining vertices at distance two through a deleted centre were already at distance at most two on this cycle, because the three neighbours of that centre form a three-vertex path on it. Thus the colouring is valid in the whole ring.

Consider now a representation as in Lemma~\ref{common:structure}(iii). We induct on the number $\ell$ of loops in its cubic multigraph $H$. If $\ell=0$, Lemma~\ref{pos:lem:expansion} colours $T(H)$; repeated applications of Lemma~\ref{pos:lem:diamond} insert all required strings.

Suppose $\ell>0$, and choose a loop at $x$. The triangle corresponding to $x$, together with the nonempty string replacing this loop, forms a pendant lobe. It is joined to the rest by a single edge, whose outside endpoint we call $w$. The vertex $w$ may be in a triangle or may be a terminal of another diamond.

Let $J$ be the five-vertex graph obtained by subdividing one edge of a fresh $K_4$. Its root $r$ has degree two, and its other vertices have degree three. Replace the entire chosen lobe by its expanded one-pole: replace each of the five vertices of $J$ with a triangle, using the third vertex of the root triangle as a free attachment vertex, and join that vertex to $w$. The resulting graph $F'$ is simple, cubic and claw-free. In its representation, the base vertex $x$ and its loop have been replaced by the five vertices and simple edges of $J$, while the third incident connection is retained at $r$. Hence the new representing multigraph has exactly $\ell-1$ loops.

By induction, colour $F'$. Delete the expanded one-pole and retain all outside colours. The outside vertex $w$ now has degree two. Lemma~\ref{pos:lem:loop} attaches and colours the seven-vertex lobe $L$, corresponding to a loop replaced by one diamond. If the original loop string had more diamonds, insert the additional diamonds on a link of this string using Lemma~\ref{pos:lem:diamond}. This restores the original graph without changing the outside colouring and completes the induction.
\end{proof}

\subsection{Subdivision and the subcubic case}
We next prove a subdivision lemma that will be used to pass from cubic to subcubic graphs.

\begin{lemma}\label{pos:lem:subdivision}
Let $F$ be a simple subcubic graph with a \packA-packing colouring, and let $uv\in E(F)$. Assume that at each endpoint of degree three, its two neighbours other than the opposite endpoint are adjacent. Let $F'$ be obtained by replacing $uv$ with the path $uwv$, where $w$ is new. Then $F'$ admits a \packA-packing colouring.
\end{lemma}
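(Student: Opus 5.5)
The plan is to keep the given colouring $\col$ of $F$ on every old vertex and choose a colour for the new vertex $w$. Only when every choice fails will we recolour one endpoint of $uv$. Subdividing $uv$ does not shorten any distance between old vertices, so old pairs stay valid. The new vertex $w$ is adjacent only to $u,v$. The vertices at distance two from $w$ are exactly $N_F(u)\setminus\{v\}$ and $N_F(v)\setminus\{u\}$. Hence the constraints on $\col(w)$ are: it differs from $\col(u)$ and $\col(v)$, and if it is positive it avoids the positive colours on $N_F(u)\cup N_F(v)\setminus\{u,v\}$.

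\textbf{Easy cases.} If neither $\col(u)$ nor $\col(v)$ is $0$, set $\col(w)=0$; this is valid because $w$ has only the neighbours $u,v$. Since $\col(u)\ne\col(v)$, we may otherwise assume $\col(u)=0$ and $y=\col(v)\in\pos$. Then $\col(w)$ must be a positive colour outside the forbidden set
\[
X=\{y\}\cup\bigl(\col(N_F(u)\setminus\{v\})\cup\col(N_F(v)\setminus\{u\})\bigr)\setminus\{0\}.
\]
If $u$ has degree three, the hypothesis says its other neighbours $p,q$ are adjacent. So at most one of them has colour $0$, and both were within distance two of $v$ in $F$; hence their positive colours differ from $y$. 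The same holds for the other neighbours $r,s$ of $v$. If either endpoint has degree at most two, or a $0$ occurs among $p,q,r,s$, or two of these vertices share a positive colour, then $|X|\le 4$ with room to spare in most cases, and we check directly that some colour of $\pos$ is free.

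\textbf{Main obstacle.} The hard case is when $X=\pos$. Then $y$ and the positive colours on $p,q,r,s$ together exhaust all four positive colours. My first attempt is to recolour $v$ to $0$ and give $w$ the colour $y$. This requires $r,s$ to be nonzero. It is valid for $w$ because $y$ was already absent from the distance-two neighbourhood of $v$ in $F$, and $w$ sees only the old neighbourhood of $u$ and $v$, which $v$ previously saw within distance two. If instead one of $r,s$, say $r$, has colour $0$, then $s,p,q$ are positive and distinct, and I would exploit the triangle $vrs$ as follows. Give $v$ the colour $0$ and $r$ the colour $y$, and check that $y$ is admissible at $r$. This uses the fact that every vertex within distance two of $r$ lies within distance three of $v$. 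The closed triangle structure should limit which of these vertices can carry $y$, though this is the step I expect to need the most care. If this also fails, I would fall back on a short table of local recolourings on $\{u,w,v,r,s\}$, in the style of Table~\ref{pos:tab:loop}, chosen according to the colour pattern on $p,q,s$. In each case I would verify the checks only at $u,v$ and their retained neighbours, as in the discussion preceding Lemma~\ref{pos:lem:diamond}.
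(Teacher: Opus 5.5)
\textbf{There is a genuine gap in your hard case.} Your easy moves are sound: $\col(w)=0$ when both endpoints are positive; a free positive colour for $w$ when one exists; and $\col(v)=0$, $\col(w)=y$ whenever every other neighbour of $v$ is positive. Your summary sentence is loose, since $|X|\le4$ leaves no free colour and you need $|X|\le3$. Still, these three moves cover everything except one configuration: $d_F(u)=d_F(v)=3$, $\col(r)=0$, and $\{\col(p),\col(q),\col(s)\}=\pos\setminus\{y\}$. Here $p,q$ are automatically positive because $\col(u)=0$.

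In this configuration your swap fails outright. After $\col(v)=0$ and $\col(r)=y$, the vertex $w$ has two neighbours coloured $0$, so it must be positive. But $p,q,r,s$ at distance two from $w$ now carry all four positive colours. The claimed admissibility of $y$ at $r$ is also unsupported: colour $y$ may recur at distance three from $v$.

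Nor can a table of recolourings confined to $\{u,w,v,r,s\}$ work. Take $F$ with edges $uv,up,uq,pq,vr,vs,rs$ and pendant edges $pp',qq',rr',ss'$. Colour it by $\col(u)=\col(r)=0$, $\col(v)=\col(q')=1$, $\col(p)=\col(r')=2$, $\col(q)=\col(s')=3$, $\col(s)=\col(p')=4$. This colouring is valid and satisfies the hypothesis. Keep $p,q,p',q',r',s'$ fixed.
\begin{itemize}
\item In $F'$, $u$ sees colours $1,2,3,4$ within distance two, so $\col(u)=0$. Hence $w$ is positive and avoids $2,3$, giving $\col(w)\in\{1,4\}$.
\item If $\col(v)=0$, then $r,s,w$ are positive and pairwise within distance two. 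Also $r,s$ avoid $2,3$, so all three would need distinct colours from $\{1,4\}$.
\item If $\col(v)>0$, then $v$ avoids $2,3$, so $\{\col(v),\col(w)\}=\{1,4\}$. This forces $\col(r)=\col(s)=0$ on the edge $rs$.
\end{itemize}
Either way no colouring exists, so any repair must touch a vertex outside your set.

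\textbf{The missing idea is to recolour on the zero endpoint's side.} Put $\col(w)=0$ and promote $u$ to a positive colour. In $F'$ the vertices within distance two of $u$ are $w,p,q,v,p',q'$, where $p',q'$ are the third neighbours of $p,q$. The vertex $s$ is now at distance three from $u$, which frees the colour you were missing. With $y=1$, $\col(p)=2$, $\col(q)=3$, both $p',q'$ have colours in $\{0,1,4\}$.
\begin{itemize}
\item If neither of $p',q'$ has colour $4$, give $u$ colour $4$.
\item Otherwise, say $\col(p')=4$. Set $\col(u)=2$ and $\col(p)=0$. The neighbours $u,q,p'$ of $p$ then have colours $2,3,4$, and every vertex within distance two of $u$ has colour in $\{0,1,3,4\}$.
\end{itemize}
This is the paper's argument; it treats $d_F(u)\le2$ with the same promote-$u$, demote-a-neighbour swap. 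In the example above it gives $\col(w)=\col(p)=0$ and $\col(u)=2$, recolouring $p\notin\{u,w,v,r,s\}$.
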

\begin{proof}
Subdividing an edge cannot decrease distances between old vertices. If both $u$ and $v$ have positive colours, set $\col(w)=0$ and retain every old colour.

Otherwise, exactly one endpoint has colour $0$. By reversing the edge and permuting positive colours, assume
\[
\col(u)=0,\qquad \col(v)=1.
\]
We set $\col(w)=0$ and recolour $u$, sometimes also one neighbour of $u$.

If $d_F(u)=1$, give $u$ colour $2$. Its only old vertex at distance two in $F'$ is $v$, whose colour is $1$.

Suppose $d_F(u)=2$, and let $a$ be the neighbour other than $v$. Since $a$ is adjacent to a zero vertex, it is positive; moreover $a$ and $v$ had distance two, so we may call its colour $2$. If a positive colour is absent from
\[
\{1,2\}\cup \col\bigl(N_F(a)\setminus\{u\}\bigr),
\]
give that colour to $u$. This checks every old vertex within distance two of $u$ in $F'$. If no positive colour is absent, then $a$ has two other neighbours with colours $3$ and $4$. Set
\[
\col'(u)=2,\qquad \col'(a)=0.
\]
The neighbours of $a$ now have colours $2,3,4$, so the new zero causes no adjacency conflict. Every vertex at distance one or two from $u$ has colour in $\{0,1,3,4\}$, proving that its new positive colour is valid.

Finally suppose $d_F(u)=3$, with other neighbours $a,b$. By hypothesis $ab\in E(F)$. The colours of $a,b$ are positive and distinct, and both differ from $\col(v)=1$; call them $2,3$, respectively. Each of $a,b$ has at most one remaining neighbour, denoted $a'$ or $b'$ when present. Such a neighbour is within distance two of both $a$ and $b$, so its colour belongs to $\{0,1,4\}$. These neighbours may coincide with each other or with $v$.

If neither remaining neighbour has colour $4$, give $u$ colour $4$. Its new distance-two neighbourhood contains only colours $0,1,2,3$. Otherwise, by interchanging $a,b$ and the names $2,3$, assume that $a'$ has colour $4$. Set
\[
\col'(u)=2,\qquad \col'(a)=0.
\]
The neighbours of $a$ are now $u,b,a'$, with colours $2,3,4$, so its new zero is valid. The vertices at distance one or two from $u$ lie among
\[
\{w,a,b,v,a',b'\},
\]
with colours in $\{0,1,3,4\}$ after the recolouring. Thus $u$ has no positive-colour conflict. In every case all other colours are unchanged; the new zero $w$ has positive neighbours $u,v$, and old distances do not decrease. This proves the lemma.
\end{proof}

We can now prove the main theorem.

\begin{proof}[Proof of Theorem~\ref{thm:mainA}]
Suppose that a counterexample exists and choose one, $G$, with the minimum number of vertices. Components can be coloured independently, so $G$ is connected.

An isolated vertex can receive colour $0$. A leaf $x$ can also be restored to a colouring of $G-x$: if its neighbour $u$ has a positive colour, give $x$ colour $0$; if $\col(u)=0$, give $x$ a positive colour absent from the at most two other neighbours of $u$. These are the only old vertices at distance two from $x$. Hence
\[
\delta(G)\ge2.
\]

Suppose that $x$ is a degree-two vertex not lying in a triangle. Its neighbours $u,v$ are nonadjacent. Suppress $x$, replacing $ux,xv$ with the new edge $uv$, to obtain a smaller simple subcubic graph $G'$. We claim that $G'$ is claw-free. Only $u,v$ change their sets of neighbours, and adding $uv$ cannot create a claw centred elsewhere. If $u$ has degree three, let $a,b$ be its other neighbours. The vertex $x$ was adjacent to neither $a$ nor $b$, because it lies in no triangle. Since $G$ is claw-free, $ab$ must be an edge. Therefore $u$ still cannot be the centre of a claw in $G'$. The same argument applies at $v$; endpoints of degree two cannot be claw centres.

By minimality, $G'$ is colourable. The preceding argument also verifies the endpoint hypothesis of Lemma~\ref{pos:lem:subdivision} for its edge $uv$. Subdividing that edge recovers $G$ and gives a valid colouring, a contradiction. Consequently, every degree-two vertex of $G$ lies in a triangle.

For each degree-two vertex $x$ of $G$, take a fresh copy of the seven-vertex graph $L$ from Lemma~\ref{pos:lem:loop} and join $x$ to its degree-two vertex $z_2$. Denote the resulting graph by $\widehat G$. It is simple and cubic. It is also claw-free: at each old attachment vertex $x$, its two old neighbours are adjacent because $x$ lies in a triangle; at the new attachment vertex $z_2$, its two neighbours in $L$ are adjacent; every other neighbourhood is unchanged and was already claw-free.

By Theorem~\ref{pos:thm:cubic}, $\widehat G$ has a \packA-packing colouring. Restrict it to $G$. Deleting vertices cannot decrease distances between the retained vertices, so the restricted colouring remains valid. This final contradiction proves the theorem.
\end{proof}

\section{\texorpdfstring{$\packB$}{(1,1,3,3,4)}-packing colourings of triangle expansions}\label{sec:negative}
\subsection{A characterisation in terms of the base graph}
An \emph{independent odd-cycle transversal} is an independent set $C$ such that $G-C$ is bipartite. A neighbour $x$ of $c\in C$ is \emph{private with respect to $C$} if
\[
N_G(x)\cap C=\{c\}.
\]
Following Civan, Deniz and Yetim~\cite{CDY}, an independent set with a private neighbour for each of its vertices is called \emph{gated}. We write $\PP(G)$ for the existence of a gated independent odd-cycle transversal in $G$.

The selection of $3$-packing representatives used below is a special case of the Hall-type construction in Mortada, El Zein and Al Hajjar~\cite[Lemma~4]{MEA}. For triangle expansions, the following lemma gives an exact distance criterion and identifies the additional private-neighbour condition required for a $4$-packing.

\begin{lemma}\label{neg:lem:representatives}
Let $G$ be a simple cubic graph and let $I$ be an independent set of $G$. Choose $f(v)\in N_G(v)$ for every $v\in I$, and let $p_v=(v,f(v))$ be the corresponding vertex of $Q_v$ in $T(G)$.
\begin{enumerate}[label=(\roman*)]
\item The set $\{p_v:v\in I\}$ is a $3$-packing if and only if $f$ is injective.
\item The set $\{p_v:v\in I\}$ is a $4$-packing if and only if $N_G(f(v))\cap I=\{v\}$ for every $v\in I$.
\end{enumerate}
\end{lemma}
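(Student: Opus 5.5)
The plan is to compute the distance in $T(G)$ between two representatives $p_u=(u,f(u))$ and $p_v=(v,f(v))$ with $u\ne v$ in $I$ exactly. Both parts of the lemma then follow by reading off when this distance is at most $3$ or at most $4$.

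The basic observation concerns how paths move in $T(G)$. A path alternates between moves inside a triangle $Q_x$ and external edges $(x,y)(y,x)$. Leaving $Q_u$ from $p_u$ through the external edge at $p_u$ costs one step and lands at $(f(u),u)\in Q_{f(u)}$. Leaving through any other vertex of $Q_u$ costs at least two steps: one internal step, then the external edge. The same holds at $p_v$.

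Since $u\ne v$, a shortest $p_u$--$p_v$ path projects to a $u$--$v$ walk in $G$ of length $m\ge 1$. Because $I$ is independent, $uv\notin E(G)$, so $m\ge 2$. The path uses $m$ external edges. It also needs an internal step inside each intermediate triangle $Q_x$, since one enters and leaves $Q_x$ through different vertices. Finally, it needs an extra internal step at the start unless it leaves via $p_u$, and likewise at the end.

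For $m=2$ the walk is $u,x,v$ with $x\in N(u)\cap N(v)$. The length is
\[
2+1+[x\ne f(u)]+[x\ne f(v)].
\]
This equals $3$ exactly when $f(u)=f(v)=x$, and equals $4$ exactly when $x$ equals one of $f(u),f(v)$. If $m\ge 3$, the length is at least $3+2=5$.

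Hence $d(p_u,p_v)\le 3$ if and only if $f(u)=f(v)$, which gives (i). Here a $3$-packing requires all pairwise distances to be at least $4$.

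For (ii), a $4$-packing requires all pairwise distances to be at least $5$. By the computation, $d(p_u,p_v)\le 4$ if and only if $f(u)\in N(v)$ or $f(v)\in N(u)$, the case $f(u)=f(v)$ being included. So the set is a $4$-packing if and only if no $w\in I\setminus\{v\}$ is adjacent to $f(v)$, for every $v\in I$. Since $f(v)\in N(v)$, this says exactly that $N_G(f(v))\cap I=\{v\}$.

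The main obstacle is justifying the lower bound on the length cleanly: every $p_u$--$p_v$ path of projected length $m$ has length at least $m+(m-1)+\epsilon_u+\epsilon_v$. I would argue this by charging the steps. Each external edge is one step. Each visit to an intermediate triangle, entered at one vertex and exited at another, costs at least one internal step. The first and last triangles cost an extra internal step unless the path exits or enters at $p_u$ or $p_v$ respectively.

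A subtlety is that the path may revisit a triangle or the projected walk may backtrack. This only lengthens the path, so the minimum is still realised by the bound above. Simplicity of $G$ guarantees that the neighbours $x$ are distinct triangles, so the case analysis for $m=2$ is exhaustive.
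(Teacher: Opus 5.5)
Your proof is correct and follows essentially the same route as the paper: you rule out projected walks of length at least three because no two external edges are consecutive (your bound $m+(m-1)+\epsilon_u+\epsilon_v\ge 5$ is the paper's $2r-1$ estimate), compute the exact length $3+[x\ne f(u)]+[x\ne f(v)]$ of the path through a common neighbour $Q_x$, and read off both conditions. The only slip is cosmetic: that length equals $4$ when $x$ equals \emph{exactly} one of $f(u),f(v)$, and your subsequent ``$d(p_u,p_v)\le 4$'' formulation already handles this correctly.
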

\begin{proof}
A path between different replacing triangles that uses $r$ external edges has length at least $2r-1$, since no two external edges are consecutive. Its projection to the base is a walk between the corresponding base vertices. Hence
\[
d_{T(G)}(p_u,p_v)\ge 2d_G(u,v)-1.
\]
In particular, base distance at least three guarantees expanded distance at least five.

It remains to consider distinct $u,v\in I$ with a common neighbour. For every $x\in N_G(u)\cap N_G(v)$, the path through $Q_x$ has length
\[
3+\ind_{f(u)\ne x}+\ind_{f(v)\ne x}.
\]
These lengths are at most five, whereas a path using three or more external edges has length at least five. Therefore
\begin{equation*}\label{neg:eq:distance}
d_{T(G)}(p_u,p_v)=\min_{x\in N_G(u)\cap N_G(v)}
\bigl(3+\ind_{f(u)\ne x}+\ind_{f(v)\ne x}\bigr).
\end{equation*}
The distance is at least four exactly when $f(u)\ne f(v)$. It is at least five exactly when neither $f(u)$ nor $f(v)$ is a common neighbour of $u$ and $v$. Applying these observations to every pair in $I$ proves both statements.
\end{proof}

\begin{theorem}\label{neg:thm:reduction}
Let $G$ be a simple cubic graph. Then $T(G)$ admits a \packB-packing colouring if and only if $\PP(G)$ holds.
\end{theorem}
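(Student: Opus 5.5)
The plan is to prove both directions by translating colour classes of $T(G)$ into vertex sets of the base graph. The key observation is that a $3$-packing of $T(G)$ meets the replacing triangles of an independent set of $G$. Write the colour classes as $V_1,V_1'$ (independent), $V_3,V_3'$ ($3$-packings) and $V_4$ (a $4$-packing).

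\emph{Necessity.} Suppose $T(G)$ has a \packB-packing colouring.
\begin{enumerate}[label=(\alph*)]
\item Each triangle $Q_v$ meets each of $V_1,V_1'$ in at most one vertex. Hence every $Q_v$ contains a vertex of $V_3\cup V_3'\cup V_4$.
\item If $uv\in E(G)$, then every vertex of $Q_v$ is within distance $3$ of every vertex of $Q_u$: at most one internal step, the external edge $(v,u)(u,v)$, and at most one internal step. Similarly, two vertices of one triangle are adjacent.
\item Consequently the sets $A=\{v: Q_v\cap V_3\ne\emptyset\}$, $B=\{v:Q_v\cap V_3'\ne\emptyset\}$ and $C=\{v:Q_v\cap V_4\ne\emptyset\}$ are independent in $G$. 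Moreover each $Q_v$ with $v\in C$ contains exactly one vertex of $V_4$, which we write as $p_v=(v,f(v))$.
\item By (a), $A\cup B\cup C=V(G)$. So $G-C$ is covered by the independent sets $A,B$, and is therefore bipartite.
\item Lemma~\ref{neg:lem:representatives}(ii), applied to the independent set $C$ and the map $f$, gives $N_G(f(v))\cap C=\{v\}$. That is, $f(v)$ is a private neighbour of $v$, so $C$ is gated and $\PP(G)$ holds.
\end{enumerate}

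\emph{Sufficiency.} Let $C$ be a gated independent odd-cycle transversal, with private neighbours $f(v)$ for $v\in C$, and let $A,B$ be the two sides of a bipartition of $G-C$. Colour $p_v=(v,f(v))$ with the $4$-packing colour for each $v\in C$; this is valid by Lemma~\ref{neg:lem:representatives}(ii).

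For $A$ we need an injective choice $g(v)\in N_G(v)$. Consider the bipartite incidence graph between $A$ and $N_G(A)$. Every vertex of $A$ has degree $3$ there, and every vertex of $N_G(A)$ has degree at most $3$. Hence $|N(X)|\ge|X|$ for every $X\subseteq A$, and Hall's theorem gives an injective $g$. Lemma~\ref{neg:lem:representatives}(i) then shows that $\{(v,g(v)):v\in A\}$ is a $3$-packing, which receives one $3$-packing colour. Doing the same for $B$ gives the other $3$-packing colour. Since $A,B,C$ partition $V(G)$, exactly one vertex of each triangle has now been coloured.

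Let $U$ be the set of uncoloured vertices; each $Q_v$ contains exactly two of them. In $T(G)[U]$ every vertex has exactly one internal neighbour, its partner in the same triangle, and at most one external neighbour. So the components of $T(G)[U]$ are paths and cycles whose edges alternate between internal and external edges. In particular every such cycle is even, so $T(G)[U]$ is bipartite. Its two sides give the two independent colour classes, completing the colouring.

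I expect no serious obstacle. The points needing care are in the necessity direction: showing that $V_4$ meets each triangle at most once, so that $f$ is well defined, and checking that the distance bound in (b) really forces $A$, $B$, $C$ to be independent. Once these are in place, Lemma~\ref{neg:lem:representatives}(ii) is applied exactly as stated. In the sufficiency direction, the only nontrivial step is the alternating-cycle parity argument for the last two colours.
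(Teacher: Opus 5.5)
Your proposal is correct and follows essentially the same route as the paper. The necessity direction uses the distance-three bound between adjacent triangles to make $A,B,C$ independent and Lemma~\ref{neg:lem:representatives}(ii) for gatedness; the converse uses Hall's theorem and the two-matching argument for the leftover vertices. The only difference is that the paper first normalises the colouring so that each $Q_v$ has exactly one vertex coloured $3_a$, $3_b$ or $4$. You skip this step by noting that $V(G)\setminus C\subseteq A\cup B$ already makes $G-C$ bipartite, even if $A,B,C$ overlap.
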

\begin{proof}
Suppose first that $T(G)$ has such a colouring, with colours $1_a,1_b,3_a,3_b,4$. Every triangle $Q_v$ contains a vertex with a colour in $\{3_a,3_b,4\}$. Retain one such vertex in each $Q_v$ and erase the colours of every other vertex. The graph induced by the uncoloured vertices is the union of two matchings: one consists of the remaining internal edges, and the other consists of the remaining external edges. It is therefore bipartite and can be coloured with $1_a,1_b$. Thus we may assume that every $Q_v$ contains exactly one vertex coloured $3_a$, $3_b$ or $4$.

Partition $V(G)$ into $A,B,C$ according to these three colours. Any two vertices in triangles corresponding to adjacent base vertices are at distance at most three. Consequently, $A,B,C$ are independent, and $G-C$ is bipartite. For each $c\in C$, let $f(c)$ be the neighbour indicated by its representative in $Q_c$. By Lemma~\ref{neg:lem:representatives}(ii), $N_G(f(c))\cap C=\{c\}$. Thus $C$ is gated.

Conversely, let $C$ be a gated independent odd-cycle transversal, and let $(A,B)$ be a bipartition of $G-C$. For either $I=A$ or $I=B$, consider the bipartite graph with parts $I$ and $N_G(I)$, using the edges inherited from $G$. For every $X\subseteq I$, cubicity gives
\[
3|X|=|E_G(X,N_G(X))|\le 3|N_G(X)|.
\]
Hall's theorem supplies an injective map $f_I:I\to N_G(I)$ such that $f_I(v)\in N_G(v)$. Colour the corresponding representatives $3_a$ when $I=A$ and $3_b$ when $I=B$. For each $c\in C$, choose a private neighbour $f_C(c)$ and colour $(c,f_C(c))$ with $4$. Lemma~\ref{neg:lem:representatives} verifies the required packing distances.

Exactly one vertex has been selected in each $Q_v$. The unselected vertices again induce the union of the internal and external matchings, so they can be coloured with $1_a,1_b$.
\end{proof}

\subsection{Odd rings of diamonds}
For $k\ge2$, let $R_k$ be the graph with vertices $a_i,b_i,c_i,d_i$ for $i\in\mathbb Z/k\mathbb Z$, and edges
\[
a_ib_i,\quad a_ic_i,\quad b_ic_i,\quad b_id_i,\quad c_id_i,
\quad d_i a_{i+1}
\qquad(i\in\mathbb Z/k\mathbb Z).
\]
The four vertices with a fixed index induce a diamond $D_i$. We call $a_i,d_i$ its \emph{terminals} and $b_i,c_i$ its \emph{centres}. The graph $R_k$ is simple, connected, cubic, claw-free and bridgeless, and has $4k$ vertices. Indeed, every edge internal to a diamond lies on a triangle, while the linking edges lie on the cycle obtained by following $a_i b_i d_i$ in each diamond.

\begin{theorem}\label{neg:thm:rings}
Let $k\ge2$. Then $\PP(R_k)$ holds if and only if $k$ is even. Equivalently, $T(R_k)$ admits a \packB-packing colouring if and only if $k$ is even.
\end{theorem}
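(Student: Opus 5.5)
The proof goes through Theorem~\ref{neg:thm:reduction}. Since $R_k$ is a simple cubic graph, the second sentence of the statement is equivalent to the first, so it suffices to show that $\PP(R_k)$ holds exactly when $k$ is even. The plan is to classify how an independent odd-cycle transversal $C$ can meet each diamond $D_i$, then use the gatedness condition to exclude one of the two possible patterns.

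\emph{Local structure.} Each diamond $D_i$ contains the two triangles $a_ib_ic_i$ and $b_ic_id_i$, so $C$ meets both of them. Independence forbids taking both $b_i$ and $c_i$. It also forbids combining a centre with a terminal of the same diamond, since each centre is adjacent to both $a_i$ and $d_i$. Hence $C\cap D_i$ is either exactly one centre (\emph{centre type}) or exactly $\{a_i,d_i\}$ (\emph{terminal type}).

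\emph{Even $k$.} Take $C=\{b_i: i\in\mathbb Z/k\mathbb Z\}$. This set is independent, because centres of distinct diamonds are never adjacent. Removing $C$ leaves the paths $a_ic_id_i$ joined by the links $d_ia_{i+1}$, that is, a single cycle of length $3k$. This cycle is even, so $R_k-C$ is bipartite. For gatedness, $a_i$ is a private neighbour of $b_i$: its neighbours are $b_i$, $c_i$ and $d_{i-1}$, and only $b_i$ lies in $C$. Thus $\PP(R_k)$ holds.

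\emph{Odd $k$.} Suppose $C$ is a gated independent odd-cycle transversal. First we rule out terminal type. Suppose some $D_i$ has terminal type, so $a_i\in C$, and look for a private neighbour of $a_i$.
\begin{itemize}[label={}]
\item The neighbours $b_i,c_i$ are not private, since both are also adjacent to $d_i\in C$.
\item The only remaining candidate is $d_{i-1}$. Independence gives $d_{i-1}\notin C$, because $d_{i-1}$ is adjacent to $a_i$. So $D_{i-1}$ is not of terminal type and must be of centre type.
\item But then $d_{i-1}$ is adjacent to a centre of $D_{i-1}$ lying in $C$, so $N(d_{i-1})\cap C\ne\{a_i\}$, and $d_{i-1}$ is not private either.
\end{itemize}
This contradiction shows every diamond has centre type. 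As in the even case, $R_k-C$ is then a cycle of length $3k$, which is odd. This contradicts $C$ being an odd-cycle transversal, so $\PP(R_k)$ fails.

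The main care point is the local classification. The key observations are that independence forces the two options above, and that the private-neighbour condition propagates to the preceding diamond and contradicts its type. Once every diamond is of centre type, the parity count is immediate.
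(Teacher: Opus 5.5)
Your proof is correct and follows the paper's argument essentially step for step. You use the same classification of $C\cap V(D_i)$ into a single centre or $\{a_i,d_i\}$, the same exclusion of the terminal pattern via the three neighbours of $a_i$ (with $d_{i-1}$ failing because $D_{i-1}$ must then contain a selected centre), the same parity count on the residual $3k$-cycle, and the same appeal to Theorem~\ref{neg:thm:reduction}; the only cosmetic difference is that you take $a_i$ rather than $c_i$ as the private neighbour of $b_i$ in the even case, which works equally well.
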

\begin{proof}
Let $C$ be an independent odd-cycle transversal of $R_k$. Since $C$ meets both triangles $a_i b_i c_i$ and $b_i c_i d_i$, its intersection with $D_i$ has one of precisely three forms:
\[
\{b_i\},\qquad \{c_i\},\qquad \{a_i,d_i\}.
\]
Suppose that $C\cap V(D_i)=\{a_i,d_i\}$. The internal neighbours $b_i,c_i$ of $a_i$ each see both selected terminals, so neither is private. Its external neighbour is $d_{i-1}$. This vertex cannot belong to $C$, by independence. Hence the preceding diamond has a selected centre, and $d_{i-1}$ is adjacent both to that centre and to $a_i$. It is not private either. Thus $a_i$ has no private neighbour, and $C$ is not gated.

It follows that a gated independent odd-cycle transversal must select one centre in every diamond. Its complement is exactly the cycle $C_{3k}$, which is bipartite only when $k$ is even. Conversely, if $k$ is even, take $C=\{b_i:i\in\mathbb Z/k\mathbb Z\}$. This set is independent, its complement is $C_{3k}$, and $c_i$ is a private neighbour of $b_i$ for every $i$. The final equivalence follows from Theorem~\ref{neg:thm:reduction}.
\end{proof}

\begin{figure}[tb]
\centering
\begin{minipage}[b]{0.46\textwidth}
\centering
\begin{tikzpicture}[scale=1.07,vertex/.style={circle,fill=black,inner sep=1.65pt}]
\coordinate (a0) at (0.7200,2.0000);
\coordinate (b0) at (0.0000,2.4800);
\coordinate (c0) at (0.0000,1.5200);
\coordinate (d0) at (-0.7200,2.0000);
\coordinate (a1) at (1.3721,-1.6235);
\coordinate (b1) at (2.1477,-1.2400);
\coordinate (c1) at (1.3164,-0.7600);
\coordinate (d1) at (2.0921,-0.3765);
\coordinate (a2) at (-2.0921,-0.3765);
\coordinate (b2) at (-2.1477,-1.2400);
\coordinate (c2) at (-1.3164,-0.7600);
\coordinate (d2) at (-1.3721,-1.6235);
\draw[line width=.55pt] (a0) -- (b0);
\draw[line width=.55pt] (a0) -- (c0);
\draw[line width=.55pt] (b0) -- (c0);
\draw[line width=.55pt] (b0) -- (d0);
\draw[line width=.55pt] (c0) -- (d0);
\draw[line width=.55pt] (d0) -- (a2);
\draw[line width=.55pt] (a1) -- (b1);
\draw[line width=.55pt] (a1) -- (c1);
\draw[line width=.55pt] (b1) -- (c1);
\draw[line width=.55pt] (b1) -- (d1);
\draw[line width=.55pt] (c1) -- (d1);
\draw[line width=.55pt] (d1) -- (a0);
\draw[line width=.55pt] (a2) -- (b2);
\draw[line width=.55pt] (a2) -- (c2);
\draw[line width=.55pt] (b2) -- (c2);
\draw[line width=.55pt] (b2) -- (d2);
\draw[line width=.55pt] (c2) -- (d2);
\draw[line width=.55pt] (d2) -- (a1);
\node[vertex,label=70.2:{$ a_1 $}] at (a0) {};
\node[vertex,label=90.0:{$ b_1 $}] at (b0) {};
\node[vertex,label=270.0:{$ c_1 $}] at (c0) {};
\node[vertex,label=109.8:{$ d_1 $}] at (d0) {};
\node[vertex,label=-49.8:{$ a_3 $}] at (a1) {};
\node[vertex,label=-30.0:{$ b_3 $}] at (b1) {};
\node[vertex,label=150.0:{$ c_3 $}] at (c1) {};
\node[vertex,label=-10.2:{$ d_3 $}] at (d1) {};
\node[vertex,label=-169.8:{$ a_2 $}] at (a2) {};
\node[vertex,label=-150.0:{$ b_2 $}] at (b2) {};
\node[vertex,label=30.0:{$ c_2 $}] at (c2) {};
\node[vertex,label=-130.2:{$ d_2 $}] at (d2) {};
\end{tikzpicture}
\par\small (a) The base graph $R_3$.
\end{minipage}\hfill
\begin{minipage}[b]{0.49\textwidth}
\centering
\begin{tikzpicture}[scale=1.17,vertex/.style={circle,fill=black,inner sep=1.05pt}]
\coordinate (a0xb0) at (0.5453,2.1165);
\coordinate (a0xc0) at (0.5453,1.8835);
\coordinate (a0xd1) at (0.8250,1.8181);
\coordinate (b0xa0) at (0.1747,2.3635);
\coordinate (b0xc0) at (0.0000,2.2700);
\coordinate (b0xd0) at (-0.1747,2.3635);
\coordinate (c0xa0) at (0.1747,1.6365);
\coordinate (c0xb0) at (0.0000,1.7300);
\coordinate (c0xd0) at (-0.1747,1.6365);
\coordinate (d0xb0) at (-0.5453,2.1165);
\coordinate (d0xc0) at (-0.5453,1.8835);
\coordinate (d0xa2) at (-0.8250,1.8181);
\coordinate (a1xb1) at (1.5603,-1.5305);
\coordinate (a1xc1) at (1.3585,-1.4140);
\coordinate (a1xd2) at (1.1621,-1.6235);
\coordinate (b1xa1) at (1.9595,-1.3331);
\coordinate (b1xc1) at (1.9659,-1.1350);
\coordinate (b1xd1) at (2.1342,-1.0304);
\coordinate (c1xa1) at (1.3299,-0.9696);
\coordinate (c1xb1) at (1.4982,-0.8650);
\coordinate (c1xd1) at (1.5046,-0.6669);
\coordinate (d1xb1) at (2.1056,-0.5860);
\coordinate (d1xc1) at (1.9038,-0.4695);
\coordinate (d1xa0) at (1.9871,-0.1946);
\coordinate (a2xd0) at (-1.9871,-0.1946);
\coordinate (a2xb2) at (-2.1056,-0.5860);
\coordinate (a2xc2) at (-1.9038,-0.4695);
\coordinate (b2xa2) at (-2.1342,-1.0304);
\coordinate (b2xc2) at (-1.9659,-1.1350);
\coordinate (b2xd2) at (-1.9595,-1.3331);
\coordinate (c2xa2) at (-1.5046,-0.6669);
\coordinate (c2xb2) at (-1.4982,-0.8650);
\coordinate (c2xd2) at (-1.3299,-0.9696);
\coordinate (d2xb2) at (-1.5603,-1.5305);
\coordinate (d2xc2) at (-1.3585,-1.4140);
\coordinate (d2xa1) at (-1.1621,-1.6235);
\draw[line width=.45pt] (a0xb0) -- (b0xa0);
\draw[line width=.45pt] (a0xc0) -- (c0xa0);
\draw[line width=.45pt] (b0xc0) -- (c0xb0);
\draw[line width=.45pt] (b0xd0) -- (d0xb0);
\draw[line width=.45pt] (c0xd0) -- (d0xc0);
\draw[line width=.45pt] (d0xa2) -- (a2xd0);
\draw[line width=.45pt] (a1xb1) -- (b1xa1);
\draw[line width=.45pt] (a1xc1) -- (c1xa1);
\draw[line width=.45pt] (b1xc1) -- (c1xb1);
\draw[line width=.45pt] (b1xd1) -- (d1xb1);
\draw[line width=.45pt] (c1xd1) -- (d1xc1);
\draw[line width=.45pt] (d1xa0) -- (a0xd1);
\draw[line width=.45pt] (a2xb2) -- (b2xa2);
\draw[line width=.45pt] (a2xc2) -- (c2xa2);
\draw[line width=.45pt] (b2xc2) -- (c2xb2);
\draw[line width=.45pt] (b2xd2) -- (d2xb2);
\draw[line width=.45pt] (c2xd2) -- (d2xc2);
\draw[line width=.45pt] (d2xa1) -- (a1xd2);
\draw[line width=.55pt,fill=black!7] (a0xb0) -- (a0xc0) -- (a0xd1) -- cycle;
\node[vertex] at (a0xb0) {};
\node[vertex] at (a0xc0) {};
\node[vertex] at (a0xd1) {};
\draw[line width=.55pt,fill=black!7] (b0xa0) -- (b0xc0) -- (b0xd0) -- cycle;
\node[vertex] at (b0xa0) {};
\node[vertex] at (b0xc0) {};
\node[vertex] at (b0xd0) {};
\draw[line width=.55pt,fill=black!7] (c0xa0) -- (c0xb0) -- (c0xd0) -- cycle;
\node[vertex] at (c0xa0) {};
\node[vertex] at (c0xb0) {};
\node[vertex] at (c0xd0) {};
\draw[line width=.55pt,fill=black!7] (d0xb0) -- (d0xc0) -- (d0xa2) -- cycle;
\node[vertex] at (d0xb0) {};
\node[vertex] at (d0xc0) {};
\node[vertex] at (d0xa2) {};
\draw[line width=.55pt,fill=black!7] (a1xb1) -- (a1xc1) -- (a1xd2) -- cycle;
\node[vertex] at (a1xb1) {};
\node[vertex] at (a1xc1) {};
\node[vertex] at (a1xd2) {};
\draw[line width=.55pt,fill=black!7] (b1xa1) -- (b1xc1) -- (b1xd1) -- cycle;
\node[vertex] at (b1xa1) {};
\node[vertex] at (b1xc1) {};
\node[vertex] at (b1xd1) {};
\draw[line width=.55pt,fill=black!7] (c1xa1) -- (c1xb1) -- (c1xd1) -- cycle;
\node[vertex] at (c1xa1) {};
\node[vertex] at (c1xb1) {};
\node[vertex] at (c1xd1) {};
\draw[line width=.55pt,fill=black!7] (d1xb1) -- (d1xc1) -- (d1xa0) -- cycle;
\node[vertex] at (d1xb1) {};
\node[vertex] at (d1xc1) {};
\node[vertex] at (d1xa0) {};
\draw[line width=.55pt,fill=black!7] (a2xd0) -- (a2xb2) -- (a2xc2) -- cycle;
\node[vertex] at (a2xd0) {};
\node[vertex] at (a2xb2) {};
\node[vertex] at (a2xc2) {};
\draw[line width=.55pt,fill=black!7] (b2xa2) -- (b2xc2) -- (b2xd2) -- cycle;
\node[vertex] at (b2xa2) {};
\node[vertex] at (b2xc2) {};
\node[vertex] at (b2xd2) {};
\draw[line width=.55pt,fill=black!7] (c2xa2) -- (c2xb2) -- (c2xd2) -- cycle;
\node[vertex] at (c2xa2) {};
\node[vertex] at (c2xb2) {};
\node[vertex] at (c2xd2) {};
\draw[line width=.55pt,fill=black!7] (d2xb2) -- (d2xc2) -- (d2xa1) -- cycle;
\node[vertex] at (d2xb2) {};
\node[vertex] at (d2xc2) {};
\node[vertex] at (d2xa1) {};
\end{tikzpicture}
\par\small (b) The triangle expansion $T(R_3)$.
\end{minipage}
\caption{The 12-vertex ring of three diamonds and its 36-vertex triangle expansion. Each shaded triangle in (b) replaces one vertex of (a).}
\label{neg:fig:counterexample}
\end{figure}

\begin{proof}[Proof of Theorem~\ref{thm:mainB}]
Apply Lemma~\ref{common:expansion} and Theorem~\ref{neg:thm:rings}. Each base vertex gives three expanded vertices; cubicity then gives the edge count.
\end{proof}

Since the exceptional graph $\mathcal H=T(K_4)$ in~\cite{MEA} has 12 vertices, none of the graphs in Theorem~\ref{thm:mainB} is $\mathcal H$. Thus the family answers Problem~1 of that paper negatively. Its smallest member is $T(R_3)$, which has 36 vertices.

\section{Classification for two classes of bases}\label{sec:classification}
\subsection{Bridgeless claw-free bases}
We now characterise the triangle expansions whose bases are themselves claw-free. The bridgeless specialisation of Lemma~\ref{common:structure} is Oum's description~\cite[Proposition~1]{Oum}: a connected bridgeless claw-free cubic graph is $K_4$, a ring of diamonds, or a graph obtained from a bridgeless cubic multigraph $H$ by replacing every vertex with a triangle and some edges with strings of diamonds. To see why the skeleton is bridgeless, observe that a bridge of $H$ would give a bridge in the represented graph, whether its connection is a single edge or a string of diamonds. Moreover, $H$ is loopless: a loop at a cubic vertex forces its remaining incident edge to be a bridge.

\begin{theorem}\label{neg:thm:classification}
Let $G$ be a connected simple bridgeless claw-free cubic graph. The following statements are equivalent:
\begin{enumerate}[label=(\roman*)]
\item $T(G)$ admits a \packB-packing colouring;
\item $\PP(G)$ holds;
\item $G\not\cong K_4$ and $G$ is not a ring of an odd number of diamonds.
\end{enumerate}
\end{theorem}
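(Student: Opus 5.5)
By Theorem~\ref{neg:thm:reduction}, statements (i) and (ii) are equivalent for every simple cubic graph. It therefore remains to prove (ii)$\Rightarrow$(iii) and (iii)$\Rightarrow$(ii).

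For (ii)$\Rightarrow$(iii), I would treat the two excluded graphs separately. An independent set of $K_4$ has at most one vertex, and deleting at most one vertex of $K_4$ leaves a triangle, which is not bipartite. Hence $\PP(K_4)$ fails. For rings of an odd number of diamonds, $\PP$ fails by Theorem~\ref{neg:thm:rings}.

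For (iii)$\Rightarrow$(ii), I would use Oum's description recalled above. If $G$ is a ring of an even number of diamonds, Theorem~\ref{neg:thm:rings} gives $\PP(G)$. Otherwise $G$ comes from a bridgeless loopless cubic multigraph $H$ by replacing each vertex $v$ with a triangle $Q_v$ and some edges with strings of diamonds. By Petersen's theorem, $H$ has a perfect matching $M$, and $E(H)\setminus M$ is a $2$-factor. Orient each cycle of this $2$-factor cyclically; a digon is oriented as a directed $2$-cycle on its two parallel edges. Each vertex $v$ then has one incident $M$-edge, one outgoing edge and one incoming edge, and these three edges correspond to the three vertices of $Q_v$.

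The set $C$ consists of two kinds of vertices:
\begin{enumerate}[label=(\alph*)]
\item in each triangle $Q_v$, the vertex corresponding to the outgoing edge at $v$;
\item in each diamond of every string, one centre.
\end{enumerate}
\textbf{Independence.} The external neighbour of a type-(a) vertex is either a diamond terminal or the incoming-edge vertex of the next triangle along the cycle. Neither is in $C$. Two chosen centres never lie in the same diamond, and distinct diamonds are joined only through terminals.

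\textbf{Private neighbours.} For a chosen vertex of $Q_v$, take the $M$-vertex of $Q_v$. Its neighbours are the other two vertices of $Q_v$, only one of which is chosen, and its external neighbour, which is an $M$-vertex of another triangle or a terminal. For a chosen centre $b$, take the other centre $c$ of its diamond. The neighbours of $c$ are $b$ and the two terminals, and only $b$ is chosen.

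\textbf{Bipartiteness.} I would show that $G-C$ is a forest, and this is the step needing the most care. After deleting a centre, each diamond leaves an induced path from one terminal, through the other centre, to the other terminal. After deleting the chosen vertex of $Q_v$, the triangle leaves an edge between its $M$-vertex and its incoming vertex. Every vertex of $G-C$ therefore has degree at most two, so its components are paths or cycles.

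To exclude cycles, follow a component from an incoming vertex $y$ of some $Q_v$. In one direction $y$ leads, possibly through a string of diamonds, back to the outgoing vertex of the previous triangle on the cycle. That vertex is in $C$, so the walk stops there. In the other direction $y$ leads to the $M$-vertex of $Q_v$, then across the $M$-edge (through any diamond string on it) to the $M$-vertex of the partner triangle, and then to that triangle's incoming vertex. From there the walk again stops before the preceding chosen vertex. Each component is thus a path, and $G-C$ is bipartite.

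I expect the main obstacle to be this case bookkeeping: checking that strings of diamonds placed on $M$-edges or on $2$-factor edges never close a cycle, and that parallel edges and digons in $H$ cause no collision. Because $H$ is loopless, every triangle has three distinct incidences, so the argument above does not depend on whether $H$ has parallel edges.
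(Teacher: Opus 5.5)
Your proposal is correct and follows the paper's proof essentially step for step. You obtain (i)$\Leftrightarrow$(ii) from Theorem~\ref{neg:thm:reduction} and rule out $K_4$ and the odd rings. In Oum's third case you use a Petersen perfect matching $M$ with the $2$-factor oriented, take $C$ to be the outgoing triangle vertices together with one centre per diamond, use the $M$-vertex and the other centre as private neighbours, and show $G-C$ is a disjoint union of paths.

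Your degree-at-most-two argument followed by tracing components from incoming vertices is a slightly more explicit version of the paper's description of those paths. It implicitly uses that every component of $G-C$ contains an incoming vertex. This holds because each $M$-vertex is adjacent in $G-C$ to the incoming vertex of its triangle, and every diamond string is attached at triangle vertices.
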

\begin{proof}
The equivalence of (i) and (ii) is Theorem~\ref{neg:thm:reduction}. An independent set of $K_4$ has size at most one, and deleting it leaves a triangle. Hence $\PP(K_4)$ fails. The ring cases are settled by Theorem~\ref{neg:thm:rings}.

It remains to consider a graph in the third case of Oum's description. Let $H$ be its bridgeless cubic skeleton. By Petersen's theorem, $H$ has a perfect matching $M$. Orient each cycle of the $2$-factor $H-M$. A cycle consisting of two parallel edges is oriented by directing those two edges in opposite directions.

In the triangle replacing a vertex $v$ of $H$, denote by $p_v^-$, $p_v^+$ and $p_v^M$ the vertices corresponding to the incoming, outgoing and matching edges, respectively. Define $C$ by selecting $p_v^+$ in every such triangle and one centre in every inserted diamond.

This set is independent. Within each triangle or diamond only one vertex is selected. An unexpanded nonmatching edge joins an outgoing vertex to an incoming vertex, so only one endpoint is selected. An unexpanded matching edge joins two unselected matching vertices. Edges entering or joining diamonds have unselected diamond terminals at their ends, and create no further conflict.

Every selected diamond centre has the other centre as a private neighbour. For a selected triangle vertex $p_v^+$, use $p_v^M$ as a private neighbour. Its internal neighbours are $p_v^+$ and $p_v^-$; its external neighbour is either an unselected matching vertex or an unselected diamond terminal. Thus it has exactly one neighbour in $C$.

Finally, $G-C$ is a disjoint union of paths. Each vertex triangle retains the edge $p_v^-p_v^M$, and every diamond retains the path between its terminals through its unselected centre. Each matching edge, whether expanded or not, therefore connects two retained triangle edges to form a path. A nonmatching edge has lost its outgoing triangle endpoint. If it is unexpanded, it disappears; if it contains a diamond string, its remaining part is a path attached only at its incoming endpoint. Each of the paths associated with a matching edge acquires at most one such extension at either end. These components exhaust $G-C$. Hence $G-C$ is bipartite, and $C$ proves $\PP(G)$.
\end{proof}

The claw-free assumption in this theorem concerns the \emph{base} $G$. Although $T(G)$ is always claw-free, the theorem does not classify expansions of arbitrary bridgeless cubic bases.

\subsection{Double expansions and perfect matchings}
The private-neighbour condition also has a direct matching interpretation after a second triangle expansion. The next result does not require the absence of bridges.

\begin{theorem}\label{neg:thm:double}
Let $H$ be a loopless cubic multigraph. Then $T(T(H))$ admits a \packB-packing colouring if and only if $H$ has a perfect matching.
\end{theorem}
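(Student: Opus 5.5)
By Lemma~\ref{common:expansion}, $G:=T(H)$ is a simple cubic graph, so Theorem~\ref{neg:thm:reduction} shows that $T(T(H))$ admits a \packB-packing colouring if and only if $\PP(T(H))$ holds. The plan is to translate $\PP(T(H))$ into a condition on $H$ and show that this condition is equivalent to the existence of a perfect matching. Throughout, let $n=|V(H)|$, so that $|E(H)|=3n/2$.

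\emph{Step 1 (shape of $C$).} Let $C$ be an independent odd-cycle transversal of $T(H)$. It must meet every triangle $Q_v$, and by independence it meets each one exactly once. Conversely, take any set $C$ with exactly one vertex in each $Q_v$. Then $T(H)-C$ is the union of the remaining internal edges, at most one per triangle, and some external edges, which form a matching. A union of two matchings is bipartite, so the bipartiteness requirement is automatic.

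Hence $C$ is encoded by a choice $f(v)\in E(H)$ incident with $v$ for every $v$, namely $C=\{(v,f(v))\}$. The set $C$ is independent exactly when no edge $e=uv$ has $f(u)=f(v)=e$. This is because the only edges of $T(H)$ between different triangles are the external edges $(u,e)(v,e)$. In particular, $f$ is injective.

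\emph{Step 2 (private neighbours).} Fix $p=(v,f(v))\in C$ and examine its three neighbours.

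\begin{itemize}
\item Its external neighbour is $(u,f(v))$, where $u$ is the other end of $f(v)$. This vertex is adjacent to $p$ and also to the chosen vertex of $Q_u$, which exists and differs from $(u,f(v))$. So it is never private.
\item An internal neighbour $(v,e')$ with $e'=vu'$ and $e'\neq f(v)$ is adjacent to no other vertex of $C$ in $Q_v$. Its external neighbour $(u',e')$ lies in $C$ exactly when $f(u')=e'$.
\end{itemize}

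So $C$ is gated if and only if every $v$ has an incident edge $g(v)\neq f(v)$ that is not chosen by its other endpoint either. Parallel edges need care here: distinct incidences are distinct vertices, and the argument only refers to individual edges, so it goes through unchanged.

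\emph{Step 3 (the translation).} Call an edge \emph{picked} if it equals $f(w)$ for some endpoint $w$. The condition from Step~2 says exactly that $g(v)$ is unpicked.

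\begin{itemize}
\item Suppose $\PP(T(H))$ holds. By independence and injectivity of $f$, exactly $n$ edges are picked, leaving $n/2$ unpicked edges. By Step~2, every vertex is incident with an unpicked edge. Since $n/2$ edges cover all $n$ vertices, they form a perfect matching.
\item Conversely, let $M$ be a perfect matching. The graph $H-M$ is a $2$-factor whose cycles may have length two. Orient each cycle, directing a digon's two parallel edges oppositely, and let $f(v)$ be the outgoing edge at $v$. Then no edge is chosen by both ends, and the matching edge $g(v)\in M$ is never picked. By Steps~1 and~2 this gives $\PP(T(H))$.
\end{itemize}

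The main obstacle is Step~2: one must verify exactly which neighbours can be private, namely that external ones never are, including in the presence of parallel edges. Once that is settled, the counting argument and the $2$-factor construction are routine.
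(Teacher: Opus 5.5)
Your proposal is correct and follows essentially the same route as the paper: reduce via Theorem~\ref{neg:thm:reduction} to $\PP(T(H))$, show that the external neighbour of a chosen incidence is never private, so the $n/2$ unselected edges must cover every vertex; conversely, orient the cycles of $H-M$ and use the matching incidence as the private neighbour. The only cosmetic difference is that you get bipartiteness of $T(H)-C$ from the union-of-two-matchings observation, whereas the paper identifies the components of $T(H)-C$ as copies of $P_4$.
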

\begin{proof}
Put $G=T(H)$, which is simple by Lemma~\ref{common:expansion}. Suppose first that $\PP(G)$ holds, witnessed by $C$. Because $C$ is independent and $G-C$ is bipartite, $C$ meets each replacing triangle of $G$ in exactly one vertex. Say that a vertex $v$ of $H$ \emph{selects} the edge corresponding to that chosen incidence. No edge can be selected by both endpoints, since its two incidences are adjacent in $G$.

Writing $n=|V(H)|$, exactly $n$ distinct edges are selected, leaving
\[
|E(H)|-n=\frac n2
\]
unselected edges. The external neighbour of a chosen incidence cannot be private: it is adjacent both to that incidence and to the chosen incidence in its own triangle. Therefore every private neighbour must lie in the same triangle as the chosen vertex. Such an internal neighbour corresponds to an edge selected by neither endpoint. Consequently, every vertex of $H$ is incident with an unselected edge. The $n/2$ unselected edges cover all $n$ vertices, so they form a perfect matching.

Conversely, let $M$ be a perfect matching of $H$. Orient every cycle of $H-M$, and choose the outgoing incidence in each replacing triangle of $G$. These chosen vertices form an independent set $C$. The matching incidence in the same triangle is a private neighbour of each chosen vertex. After deleting $C$, every triangle retains its incoming--matching edge, every external nonmatching edge has lost its outgoing endpoint, and every external matching edge remains. Hence $G-C$ is a disjoint union of copies of $P_4$, one for each edge of $M$. In particular, $\PP(G)$ holds. Apply Theorem~\ref{neg:thm:reduction} in both directions.
\end{proof}

\begin{corollary}
Let $H$ be a bridgeless loopless cubic multigraph. Then $T(T(H))$ admits a \packB-packing colouring.
\end{corollary}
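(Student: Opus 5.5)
The corollary follows by combining Theorem~\ref{neg:thm:double} with Petersen's theorem, so the plan is short. Let $H$ be a bridgeless loopless cubic multigraph. By Theorem~\ref{neg:thm:double}, it suffices to show that $H$ has a perfect matching. Once a perfect matching exists, that theorem gives a \packB-packing colouring of $T(T(H))$ directly. Here $T(H)$ is simple by Lemma~\ref{common:expansion}, so the second expansion is well defined.

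To obtain the perfect matching, I will invoke Petersen's theorem. Every bridgeless cubic multigraph has a perfect matching, and this holds for multigraphs with parallel edges; loops are excluded here in any case. The only point to check is that the standard proof applies to multigraphs. That proof is via Tutte's condition: for any $S\subseteq V(H)$, each odd component $K$ of $H-S$ sends an odd number of edges to $S$, by parity of $3|K|$ minus twice the internal edge count. Since $H$ is bridgeless, this number is not $1$, so it is at least $3$. Counting the edges into $S$ then gives at most $3|S|$, so the number of odd components is at most $|S|$.

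Tutte's theorem, which is valid for multigraphs since parallel edges do not affect matchability, yields the perfect matching. I should also treat disconnected $H$ component by component, but the Tutte argument above is already global, so no separate case is needed. I expect no real obstacle. The only care needed is confirming that Petersen's theorem is stated for multigraphs, or noting the short Tutte argument above if a reference only covers simple graphs.
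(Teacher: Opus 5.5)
Your proof is correct and is essentially the paper's argument: Theorem~\ref{neg:thm:double} reduces the corollary to the existence of a perfect matching of $H$, and Petersen's theorem supplies one. Your Tutte-condition sketch, which checks that Petersen's theorem holds for loopless multigraphs, is sound; the paper does not include this check.
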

\begin{proof}
Petersen's theorem supplies a perfect matching of $H$.
\end{proof}

\section{Open questions}\label{sec:questions}
The family in Theorem~\ref{thm:mainB} provides counterexamples of order $12k$ for every odd $k\ge3$. It remains to determine the minimum order of a connected claw-free subcubic graph, distinct from $T(K_4)$, that admits no \packB-packing colouring.

Theorem~\ref{neg:thm:reduction} also suggests studying gated independent odd-cycle transversals in cubic graphs beyond the claw-free case. In particular, the odd rings of diamonds used here have cyclic edge cuts of size two. A natural next case is to characterise the simple cyclically $4$-edge-connected cubic graphs that admit such a transversal.

The general question of Gastineau and Togni~\cite{GT}, asking whether every connected subcubic graph other than the Petersen graph is $(1,1,2,3)$-packing colourable, remains open~\cite[Problem~2]{MEA}. The claw-free case is covered by the result of Mortada, El Zein and Al Hajjar cited in the introduction; the two results proved here concern the different sequences $\packA$ and $\packB$.

\end{document}